\newcommand{\BEAS}{\begin{eqnarray*}}
\newcommand{\EEAS}{\end{eqnarray*}}
\newcommand{\BEA}{\begin{eqnarray}}
\newcommand{\EEA}{\end{eqnarray}}
\newcommand{\BEQ}{\begin{equation}}
\newcommand{\EEQ}{\end{equation}}
\newcommand{\BIT}{\begin{itemize}}
\newcommand{\EIT}{\end{itemize}}
\newcommand{\BNUM}{\begin{enumerate}}
\newcommand{\ENUM}{\end{enumerate}}
\newcommand{\BA}{\begin{array}}
\newcommand{\EA}{\end{array}}
\newcommand{\ones}{\mathbf 1}
\newcommand{\reals}{{\mbox{\bf R}}}
\newcommand{\symm}{{\mbox{\bf S}}}  
\newcommand{\Rank}{\mathop{\bf Rank}}
\newcommand{\Card}{\mathop{\bf Card}}
\newcommand{\Tr}{\mathop{\bf Tr}}
\newcommand{\diag}{\mathop{\bf diag}}
\newcommand{\lambdamax}{{\lambda_{\rm max}}}
\newcommand{\lambdamin}{\lambda_{\rm min}}
\newcommand{\idm}{\bf I}
\newcommand{\Expect}{\textstyle\mathop{\bf E{}}}
\newcommand{\QED}{~~\rule[-1pt]{6pt}{6pt}}
\newcommand{\argmin}{\mathop{\rm argmin}}
\newtheorem{theorem}{Theorem}
\newtheorem{remark}[theorem]{Remark}
\newtheorem{definition}[theorem]{Definition}
\newcounter{exno}
\newenvironment{proof}{\textbf{Proof.}}{\QED\bigskip}
\long\def\@makecaption#1#2{
   \vskip 9pt 
   \begin{small}
   \setbox\@tempboxa\hbox{{\bf #1:} #2}
   \ifdim \wd\@tempboxa > 5.5in
        \begin{center}
        \begin{minipage}[t]{5.5in}
        \addtolength{\baselineskip}{-0.95pt}
        {\bf #1:} #2 \par
        \addtolength{\baselineskip}{0.95pt}
        \end{minipage}
        \end{center}
   \else 
    \hbox to\hsize{\hfil\box\@tempboxa\hfil}  
   \fi
   \end{small}\par
}
\newcounter{oursection}
\newcounter{lecture}
\newtheorem{lemma}{Lemma}
\begin{document}

\title{Testing the Nullspace Property\\ using Semidefinite Programming}
\author{Alexandre d'Aspremont\thanks{In alphabetical order. ORFE Dept., Princeton University, Princeton, NJ 08544. \texttt{aspremon@princeton.edu}} \and Laurent El Ghaoui\thanks{EECS  Dept., U.C. Berkeley, Berkeley, CA 94720. \texttt{elghaoui@eecs.berkeley.edu}} }
\maketitle

\begin{abstract}
Recent results in compressed sensing show that, under certain conditions, the sparsest solution to an underdetermined set of linear equations can be recovered by solving a linear program. These results either rely on computing sparse eigenvalues of the design matrix or on properties of its nullspace. So far, no tractable algorithm is known to test these conditions and most current results rely on asymptotic properties of random matrices. Given a matrix $A$, we use semidefinite relaxation techniques to test the nullspace property on $A$ and show on some numerical examples that these relaxation bounds can prove perfect recovery of sparse solutions with relatively high cardinality.
\end{abstract}

{\bf Keywords:} Compressed sensing, nullspace property, semidefinite programming, restricted isometry constant.

\section{Introduction}
\label{s:intro} 
A recent stream of results in signal processing have focused on producing explicit conditions under which the sparsest solution to an underdetermined linear system can be found by solving a linear program. Given a matrix $A\in\reals^{m \times n}$ with $n>m$
and a vector $v\in\reals^m$, writing $\|x\|_0=\Card(x)$ the number of nonzero coefficients in $x$, this means that the solution of the following (combinatorial) $\ell_0$ minimization problem:
\BEQ\label{eq:l0-prob}
\BA{ll}
\mbox{minimize} & \|x\|_0\\
\mbox{subject to} & Ax=v,\\
\EA
\EEQ
in the variable $x\in\reals^n$, can be found by solving the (convex) $\ell_1$ minimization problem:
\BEQ\label{eq:l1-prob}
\BA{ll}
\mbox{minimize} & \|x\|_1\\
\mbox{subject to} & Ax=v,\\
\EA
\EEQ
in the variable $x\in\reals^n$, which is equivalent to a linear program. 

Based on results by \cite{Vers92} and \cite{Affe92}, \cite{Dono05} show that when the solution $x_0$ of (\ref{eq:l0-prob}) is sparse with $\Card(x_0)=k$ and the coefficients of $A$ are i.i.d. Gaussian, then the solution of the $\ell_1$ problem in (\ref{eq:l1-prob}) will always match that of the $\ell_0$ problem in (\ref{eq:l0-prob}) provided $k$ is below an explicitly computable {\em strong recovery} threshold $k_S$. They also show that if $k$ is below another (larger) {\em weak recovery} threshold $k_W$, then these solutions match with an exponentially small probability of failure.

Universal conditions for strong recovery based on sparse extremal eigenvalues were derived in \cite{Cand05} and \cite{Cand06} who also proved that certain (mostly random) matrix classes satisfied these conditions with an exponentially small probability of failure. Simpler, weaker conditions which can be traced back to \cite{Dono01}, \cite{Zhan05} or \cite{Cohe06} for example, are based on properties of the nullspace of $A$. In particular, if we define
\[
\alpha_k=\max_{\{Ax=0,~\|x\|_1=1\}} ~ \max_{\{\|y\|_\infty=1,~\|y\|_1\leq k\}} y^Tx,
\]
these references show that $\alpha_k<1/2$ guarantees strong recovery. 

One key issue with the current sparse recovery conditions in \cite{Cand05} or \cite{Dono01} is that except for explicit recovery thresholds available for certain types of random matrices, testing these conditions on generic matrices is potentially {\em harder} than solving the combinatorial $\ell_0$-norm minimization problem in (\ref{eq:l0-prob}) for example as it implies either solving a combinatorial problem to compute $\alpha_k$, or computing sparse eigenvalues. Semidefinite relaxation bounds on sparse eigenvalues were used in \cite{dAsp08b} or \cite{Lee08} for example to test the restricted isometry conditions in \cite{Cand05} on arbitrary matrices. In recent independent results, \cite{Judi08} provide an alternative proof of some of the results in \cite{Dono01}, extend them to the noisy case and produce a linear programming (LP) relaxation bound on $\alpha_k$ with explicit performance bounds.

In this paper, we derive a semidefinite relaxation bound on $\alpha_k$, study its tightness and performance. By randomization, the semidefinite relaxation also produces lower bounds on the objective value as a natural by-product of the solution. Overall, our bounds are slightly better than LP ones numerically but both relaxations share the same asymptotic performance limits. However, because it involves solving a semidefinite program, the complexity of the semidefinite relaxation derived here is significantly higher than that of the LP relaxation. 

The paper is organized as follows. In Section \ref{s:recov}, we briefly recall some key results in \cite{Dono01} and \cite{Cohe06}. We derive a semidefinite relaxation bound on $\alpha_k$ in Section \ref{s:relax}, and study its tightness and performance in Section \ref{s:tight}. Section \ref{s:algos} describes a first-order algorithm to solve the resulting semidefinite program. Finally, we test the numerical performance of this relaxation in Section \ref{s:numres}.

\paragraph{Notation}
To simplify notation here, for a matrix $X\in\reals^{m \times n}$, we write its columns $X_i$, $\|X\|_1$ the sum of absolute values of its coefficients (not the $\ell_1$ norm of its spectrum) and $\|X\|_\infty$ the largest coefficient magnitude. More classically, $\|X\|_F$ and $\|X\|_2$ are the Frobenius and spectral norms.

\section{Sparse recovery \& the null space property}
\label{s:recov} Given a \emph{coding} matrix $A\in\reals^{m \times n}$ with $n>m$, a \emph{sparse} signal $x_0\in\reals^n$ and an information vector $v\in\reals^m$ such that
\[
v=Ax_0,
\] 
we focus on the problem of perfectly recovering the signal $x_0$ from the vector $v$, assuming the signal $x_0$ is sparse enough. We define the decoder $\Delta_1(v)$ as a mapping from $\reals^m \rightarrow \reals^n$, with
\BEQ\label{eq:delta-one}
\Delta_1(v) ~\triangleq ~ \argmin_{\{x\in\scriptsize{\reals^n}:~Ax=v\}} \|x\|_1.
\EEQ
This particular decoder is equivalent to a linear program which can be solved efficiently. Suppose that the original signal $x_0$ is sparse, a natural question to ask is then: When does this decoder perfectly recover a sparse signal $x_0$? Recent results by \cite{Cand05}, \cite{Dono05} and \cite{Cohe06} provide a somewhat tight answer. In particular, as in \cite{Cohe06}, for a given coding matrix $A\in\reals^{m \times n}$ and $k>0$, we can quantify the $\ell_1$ error of a decoder $\Delta(v)$ by computing the smallest constant $C>0$ such that
\BEQ\label{eq:decoding-error}
\|x-\Delta(Ax)\|_1\leq C \sigma_k(x)
\EEQ
for all $x\in\reals^n$, where 
\[
\sigma_k(x)\triangleq \min_{\{z\in\scriptsize{\reals^n}:~\Card(z)=k\}}\|x-z\|_1
\]
is the $\ell_1$ error of the best $k$-term approximation of the signal $x$ and can simply be computed as the $\ell_1$ norm of the $n-k$ smallest coefficients of $x\in\reals^n$. We now define the \emph{nullspace property} as in \cite{Dono01} or \cite{Cohe06}.
\begin{definition}
A matrix $A\in\reals^{m \times n}$  satisfies the null space property in $\ell_1$ of order $k$ with constant~$C_k$ if and only if 
\BEQ\label{def:nullspace}
\|z\|_1 \leq C_k \|z_{T^c}\|_1
\EEQ
holds for all $z\in\reals^n$ with $Az=0$ and index subsets $T\subset[1,n]$ of cardinality $\Card(T)\leq k$, where $T^c$ is the complement of $T$ in $[1,n]$.
\end{definition}
\cite{Cohe06} for example show the following theorem linking the optimal decoding quality on sparse signals and the nullspace property constant $C_k$.
\begin{theorem}
Given a coding matrix $A\in\reals^{m \times n}$ and a sparsity target $k>0$. If $A$ has the nullspace property in (\ref{def:nullspace}) of order $2k$ with constant $C/2$, then there exists a decoder $\Delta_0$ which satisfies (\ref{eq:decoding-error}) with constant $C$. Conversely, if (\ref{eq:decoding-error}) holds with constant $C$ then $A$ has the nullspace property at the order $2k$ with constant $C$.
\end{theorem}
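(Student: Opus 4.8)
The plan is to prove the two implications separately, exhibiting an explicit decoder for the forward (sufficiency) direction and showing that any admissible decoder forces the nullspace inequality for the converse. The single mechanism underlying both directions is that if two vectors share the same image under $A$ then their difference lies in the nullspace $\{z:Az=0\}$, together with the observation that the union of two index sets of size at most $k$ has size at most $2k$; this is precisely where the order $2k$ enters, and where the constant is allowed to change by a factor of two between the two statements.

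For sufficiency I would take the decoder
\[
\Delta_0(v) \in \argmin_{\{z\,:\,Az=v\}} \sigma_k(z),
\]
the feasible point closest to being $k$-sparse. Fix $x$, put $v=Ax$ and $\bar x=\Delta_0(v)$. Since $x$ is itself feasible for the minimization defining $\bar x$, we have $\sigma_k(\bar x)\le\sigma_k(x)$, and $z:=x-\bar x$ satisfies $Az=0$. Letting $T$ and $T'$ be the supports of the best $k$-term approximations of $x$ and $\bar x$ and setting $S=T\cup T'$ with $\Card(S)\le 2k$, the order-$2k$ nullspace property with constant $C/2$ gives $\|z\|_1\le(C/2)\|z_{S^c}\|_1$. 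Because $T,T'\subset S$, the triangle inequality yields $\|z_{S^c}\|_1\le\|x_{S^c}\|_1+\|\bar x_{S^c}\|_1\le\sigma_k(x)+\sigma_k(\bar x)\le 2\sigma_k(x)$, and combining the two estimates gives $\|x-\Delta_0(Ax)\|_1=\|z\|_1\le C\sigma_k(x)$, which is the required bound.

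For the converse, assume some decoder $\Delta$ satisfies the error bound with constant $C$. I would first note that it must recover $k$-sparse signals exactly: if $\Card(u)\le k$ then $\sigma_k(u)=0$, forcing $\Delta(Au)=u$. Now fix $z$ with $Az=0$; since $\|z\|_1\le C\|z_{T^c}\|_1$ is tightest when $\|z_{T^c}\|_1$ is smallest, it suffices to establish it for $T$ equal to the support of the $2k$ largest-magnitude entries of $z$, which I split as $T=T_0\cup T_1$ with $T_0$ the top $k$ entries and $\Card(T_0),\Card(T_1)\le k$. The crux is that $z_{T_0}$ and $-(z-z_{T_0})$ share the image $v:=Az_{T_0}$, because $Az=0$; exact recovery then pins down $\Delta(v)=z_{T_0}$, so applying the error bound to $x=-(z-z_{T_0})$ gives $\|z\|_1=\|x-\Delta(Ax)\|_1\le C\sigma_k(z-z_{T_0})$, and since $T_1$ collects the $k$ largest entries of $z_{T_0^c}$ the right-hand side equals $C\|z_{T^c}\|_1$. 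The step I expect to be the main obstacle is exactly this converse construction: one must choose the pair of preimages of $v$ so that their difference is precisely $z$, confirm that exact recovery of $k$-sparse vectors determines $\Delta(v)$, and verify that the best $k$-term approximation of $z-z_{T_0}$ collapses to $\|z_{T^c}\|_1$, with the reduction to the worst-case set $T$ being what upgrades a single inequality to the nullspace property for all admissible index sets.
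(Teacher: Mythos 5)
Your proof is correct and complete in both directions. Note that the paper itself offers no argument for this theorem: its ``proof'' is a citation to \cite[Corollary 3.3]{Cohe06}, and what you have written is essentially a faithful reconstruction of the argument in that reference. Your sufficiency direction is the standard one: the decoder $\Delta_0(v)=\argmin_{\{z:Az=v\}}\sigma_k(z)$, the feasibility of $x$ giving $\sigma_k(\bar x)\le\sigma_k(x)$, and the order-$2k$ nullspace property applied to $z=x-\bar x$ on the union $S=T\cup T'$ of the two best-$k$-term supports; since $S^c\subset T^c$ and $S^c\subset T'^c$, the estimate $\|z_{S^c}\|_1\le\|x_{T^c}\|_1+\|\bar x_{T'^c}\|_1\le 2\sigma_k(x)$ holds and the factor $2$ is exactly absorbed by the constant $C/2$. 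Your converse also goes through, and the three steps you flagged as potential obstacles all check out: (i) exact recovery of $k$-sparse vectors follows from $\sigma_k(u)=0$; (ii) with $T_0$ the top-$k$ support of $z$, the vectors $z_{T_0}$ and $-(z-z_{T_0})$ have the same image under $A$ precisely because $Az=0$, their difference is $z$, and exact recovery pins down $\Delta(Az_{T_0})=z_{T_0}$, so the error bound at $x=-(z-z_{T_0})$ yields $\|z\|_1\le C\sigma_k(z-z_{T_0})$; (iii) since $z-z_{T_0}=z_{T_0^c}$ and its $k$ largest entries are those indexed by $T_1$, indeed $\sigma_k(z-z_{T_0})=\|z_{(T_0\cup T_1)^c}\|_1=\|z_{T^c}\|_1$. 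Finally, the reduction to the top-$2k$ set $T$ is legitimate, since $\|z_{T^c}\|_1$ is minimized over all sets of cardinality at most $2k$ by that choice, so the inequality for this worst case implies it for all admissible $T$.
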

\begin{proof}
See \cite[Corollary 3.3]{Cohe06}.
\end{proof}

This last result means that the existence of an optimal decoder staisfying (\ref{eq:decoding-error}) is equivalent to $A$ satisfying (\ref{def:nullspace}). Unfortunately, this optimal decoder $\Delta_0(v)$ is defined as
\[
\Delta_0(v)\triangleq \argmin_{\{z\in\scriptsize{\reals^n}:~Az=v\}} \sigma_k(z)
\]
hence requires solving a combinatorial problem which is potentially intractable. However, using tighter restrictions on the nullspace property constant $C_k$, we get the following result about the linear programming decoder $\Delta_1(v)$ in (\ref{eq:delta-one}).

\begin{theorem}\label{th:lp-recov}
Given a coding matrix $A\in\reals^{m \times n}$ and a sparsity target $k>0$. If $A$ has the nullspace property in (\ref{def:nullspace}) of order $k$ with constant $C<2$, then the linear programming decoder $\Delta_1(y)$ in~(\ref{eq:delta-one}) satisfies the error bounds in (\ref{eq:decoding-error}) with constant $
{2C}/{(2-C)}$ at the order $k$.
\end{theorem}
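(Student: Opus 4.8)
The plan is to bound the reconstruction error directly, exploiting that the difference between the true signal and its $\ell_1$-decoding lies in the nullspace of $A$, so the nullspace property of order $k$ can be brought to bear. Fix an arbitrary $x\in\reals^n$, write $\hat x=\Delta_1(Ax)$ for its decoding, and set $z=\hat x-x$. Since $A\hat x=Ax$ by the definition of $\Delta_1$ in (\ref{eq:delta-one}), we have $Az=0$, so $z$ is exactly the kind of vector to which (\ref{def:nullspace}) applies. Let $T\subset[1,n]$ index the $k$ largest-magnitude entries of $x$, so that $\Card(T)\le k$ and $\sigma_k(x)=\|x_{T^c}\|_1$; the whole argument will be run on this fixed $T$.

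The first substantive step is to turn the optimality of $\hat x$ into a constraint on $z$. Because $\hat x$ minimizes the $\ell_1$ norm over $\{z:Az=Ax\}$ and $x$ itself is feasible, we have $\|\hat x\|_1\le\|x\|_1$. Splitting both sides across $T$ and $T^c$, writing $\hat x=x+z$, and applying the reverse triangle inequality in the form $\|x_T+z_T\|_1\ge\|x_T\|_1-\|z_T\|_1$ and $\|x_{T^c}+z_{T^c}\|_1\ge\|z_{T^c}\|_1-\|x_{T^c}\|_1$, the $\|x_T\|_1$ terms cancel and I expect to obtain the cone-type inequality
\[
\|z_{T^c}\|_1\le\|z_T\|_1+2\|x_{T^c}\|_1.
\]
This is the step where the sign bookkeeping matters most and where the factor $2$ (hence the $2$ appearing in the final constant) is generated; it is the part I would check most carefully.

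With the cone condition in hand, the remaining steps are purely algebraic. The nullspace property $\|z\|_1\le C\|z_{T^c}\|_1$ rewrites, using $\|z\|_1=\|z_T\|_1+\|z_{T^c}\|_1$, as $\|z_T\|_1\le(C-1)\|z_{T^c}\|_1$. Substituting this into the cone inequality gives $\|z_{T^c}\|_1\le(C-1)\|z_{T^c}\|_1+2\|x_{T^c}\|_1$, and collecting terms yields $(2-C)\|z_{T^c}\|_1\le 2\|x_{T^c}\|_1$. Here the hypothesis $C<2$ is exactly what makes the coefficient $2-C$ positive, so I can divide to obtain $\|z_{T^c}\|_1\le\frac{2}{2-C}\|x_{T^c}\|_1$. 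Finally, applying the nullspace property once more and recalling $\sigma_k(x)=\|x_{T^c}\|_1$,
\[
\|x-\Delta_1(Ax)\|_1=\|z\|_1\le C\|z_{T^c}\|_1\le\frac{2C}{2-C}\,\sigma_k(x),
\]
which is precisely (\ref{eq:decoding-error}) with the claimed constant $2C/(2-C)$. Since $x$ was arbitrary, this proves the theorem. The only genuine obstacle is the derivation of the cone condition; everything after it is bookkeeping, with the role of $C<2$ being simply to keep the final constant finite and positive.
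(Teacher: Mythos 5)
Your proof is correct, and it is essentially the same argument the paper invokes: the paper gives no self-contained proof but points to steps (4.3)--(4.10) of the proof of Theorem 4.3 in Cohen, Dahmen and DeVore, and those steps are exactly your derivation --- the cone condition $\|z_{T^c}\|_1\le\|z_T\|_1+2\|x_{T^c}\|_1$ obtained from $\ell_1$-minimality of the decoder output via the reverse triangle inequality, combined with the nullspace property rewritten as $\|z_T\|_1\le(C-1)\|z_{T^c}\|_1$. In particular, the step you flagged (the factor $2$ in the cone condition) checks out, and the division by $2-C$ is precisely where the hypothesis $C<2$ enters, so your write-up stands as a correct, fully spelled-out version of the proof the paper cites.
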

\begin{proof}
See steps (4.3) to (4.10) in the proof of \cite[Theorem 4.3]{Cohe06}.
\end{proof}

To summarize the results above, if there exists a $C>0$ such that the coding matrix $A$ satisfies the nullspace property in (\ref{def:nullspace}) at the order $k$ then there exists a decoder which perfectly recovers signals $x_0$ with cardinality $k/2$. If, in addition, we can show that $C<2$, then the linear programming based decoder in (\ref{eq:delta-one}) perfectly recovers signals $x_0$ with cardinality $k$. In the next section, we produce upper bounds on the constant $C_k$ in (\ref{def:nullspace}) using semidefinite relaxation techniques.

\section{Semidefinite Relaxation}
\label{s:relax} Given $A\in\reals^{m \times n}$ and $k>0$, we look for a constant $C_k \geq 1$ in (\ref{def:nullspace}) such that
\[
\|x_T\|_1 \leq (C_k-1) \|x_{T^c}\|_1
\]
for all vectors $x\in\reals^n$ with $Ax=0$ and index subsets $T\subset [1,n]$ with cardinality $k$. We can rewrite this inequality
\BEQ\label{eq:ineq-C}
\|x_T\|_1 \leq \alpha_k \|x\|_1
\EEQ
with $\alpha_k \in [0,1)$. Because $\alpha_k=1-1/C_k$, if we can show that $\alpha_k<1$ then we prove that $A$ satisfies the nullspace property at order $k$ with constant $C_k$. Furthermore, if we prove $\alpha_k<1/2$, we prove the existence of a linear programming based decoder which perfectly recovers signals $x_0$ with at most $k$ errors. By homogeneity, the constant $\alpha_k$ can be computed as
\BEQ\label{eq:max-max-pb}
\alpha_k=\max_{\{Ax=0,~\|x\|_1=1\}} ~ \max_{\{\|y\|_\infty=1,~\|y\|_1\leq k\}} y^Tx,
\EEQ
where the equality $\|x\|_1=1$ can, without loss of generality, be replaced by $\|x\|_1\leq 1$. We now derive a semidefinite relaxation for problem (\ref{eq:max-max-pb}) as follows. After a change of variables
\[
\left(\BA{cc}
X & Z^T\\
Z & Y\\
\EA\right)
=
\left(\BA{cc}
xx^T & xy^T\\
yx^T & yy^T\\
\EA\right),
\]
we can rewrite (\ref{eq:max-max-pb}) as
\BEQ\label{eq:rank-one-prob}
\BA{ll}
\mbox{maximize} & \Tr(Z)\\
\mbox{subject to} & AXA^T=0,~\|X\|_1\leq 1,\\
& \|Y\|_\infty\leq1,~\|Y\|_1\leq k^2,~\|Z\|_1\leq k,\\
& \left(\BA{cc}
X & Z^T\\
Z & Y\\
\EA\right)\succeq 0,~
\Rank \left(\BA{cc}
X & Z^T\\
Z & Y\\
\EA\right)=1,\\
\EA\EEQ
in the variables $X,Y\in\symm_n$, $Z\in\reals^{n \times n}$, where all norms should be understood componentwise. We then simply drop the rank constraint to form a relaxation of (\ref{eq:max-max-pb}) as
\BEQ\label{eq:max-max-relax}
\BA{ll}
\mbox{maximize} & \Tr(Z)\\
\mbox{subject to} & AXA^T=0,~\|X\|_1\leq 1,\\
& \|Y\|_\infty\leq1,~\|Y\|_1\leq k^2,~\|Z\|_1\leq k,\\
& \left(\BA{cc}
X & Z^T\\
Z & Y\\
\EA\right)\succeq 0,
\EA
\EEQ
which is a semidefinite program in the variables $X,Y\in\symm_n$, $Z\in\reals^{n \times n}$. Note that the contraint $~\|Z\|_1\leq k$ is redundant in the rank one problem but not in its relaxation. Because all constraints are linear here, dropping the rank constraint is equivalent to computing a Lagrangian (bidual) relaxation of the original problem and adding redundant constraints to the original problem often tightens these relaxations. The dual of program (\ref{eq:max-max-relax}) can be written
\[
\BA{ll}
\mbox{minimize} & \|U_1\|_\infty + k^2 \|U_2\|_\infty + \|U_3\|_1+k\|U_4\|_\infty\\
\mbox{subject to} 
& \left(\BA{cc}
U_1-A^TWA & -\frac{1}{2} ({\idm} + U_4)\\
-\frac{1}{2} ({\idm} + U_4^T) & U_2+U_3\\
\EA\right)\succeq 0,
\EA
\]
which is a semidefinite program in the variables $U_1,U_2,U_3,W\in\symm_n$ and $U_4\in\reals^{n \times n}$. For any feasible point of this program, the objective $\|U_1\|_\infty + k^2 \|U_2\|_\infty + \|U_3\|_1+k\|U_4\|_\infty$ is an upper bound on the optimal value of (\ref{eq:max-max-relax}), hence on~$\alpha_k$. We can further simplify this program using elimination results for LMIs. In fact, \cite[\S2.6.2]{Boyd94} shows that this last problem is equivalent to
\BEQ\label{eq:sdp-kernel}
\BA{ll}
\mbox{minimize} & \|U_1\|_\infty + k^2 \|U_2\|_\infty + \|U_3\|_1+k\|U_4\|_\infty\\
\mbox{subject to} 
& \left(\BA{cc}
U_1-wA^TA &-\frac{1}{2} ({\idm} + U_4)\\
-\frac{1}{2} ({\idm} + U_4^T) & U_2+U_3\\
\EA\right)\succeq 0,
\EA
\EEQ
where the variable $w$ is now scalar. In fact, using the same argument, letting $P\in\reals^{n\times p}$ be an orthogonal basis of the nullspace of $A$, i.e. such that $AP=0$ with $P^TP=\idm$, we can rewrite the previous problem as follows
\BEQ\label{eq:dual-ker}
\BA{ll}
\mbox{minimize} & \|U_1\|_\infty + k^2 \|U_2\|_\infty + \|U_3\|_1+k\|U_4\|_\infty\\
\mbox{subject to} 
& \left(\BA{cc}
P^T U_1 P & -\frac{1}{2} P^T ({\idm} + U_4)\\
-\frac{1}{2}  ({\idm} + U_4^T)P & U_2+U_3\\
\EA\right)\succeq 0,
\EA
\EEQ
which is a (smaller) semidefinite program in the variables $U_1,U_2,U_3\in\symm_n$ and $U_4\in\reals^{n \times n}$. The dual of this last problem is then
\BEQ\label{eq:primal-ker}
\BA{ll}
\mbox{maximize} & \Tr(Q_2^TP) \\
\mbox{subject to} & \|PQ_1P^T\|_1\leq 1,~\|PQ_2^T\|_1\leq k\\
& \|Q_3\|_\infty\leq1,~\|Q_3\|_1\leq k^2\\
& \left(\BA{cc}
Q_1 & Q_2^T\\
Q_2 & Q_3\\
\EA\right)\succeq 0,
\EA\EEQ
which is a semidefinite program in the matrix variables $Q_1\in\symm_p,~Q_2\in\reals^{p \times n},~Q_3\in\symm_n$, whose objective value is equal to that of problem (\ref{eq:max-max-relax}). 

Note that adding any number of redundant constraints in the original problem (\ref{eq:rank-one-prob}) will further improve tightness of the semidefinite relaxation, at the cost of increased complexity. In particular, we can use the fact that when
\[
\|x\|_1=1,~\|y\|_\infty=1,~\|y\|_1\leq k,
\]
and if we set $Y=yy^T$ and $Z=yx^T$, we must have
\[
\sum_{i=1}^n |Y_{ij}| \leq k t_j,~|Y_{ij}|\leq t_j,~\ones^Tt\leq k,~t \leq \ones,\quad\mbox{for }i,j=1,\ldots,n,
\]
and
\[
\sum_{i=1}^n |Z_{ij}| \leq k r_j,~|Z_{ij}|\leq r_j,~\ones^T r\leq k,\quad\mbox{for }i,j=1,\ldots,n,
\]
for $r,t\in\reals^n$. This means that we can refine the constraint $\|Z\|_1\leq k$ in (\ref{eq:max-max-relax}) to solve instead
\BEQ\label{eq:max-max-relax-col}
\BA{ll}
\mbox{maximize} & \Tr(Z)\\
\mbox{subject to} & AXA^T=0,~\|X\|_1\leq 1,\\
& \sum_{i=1}^n |Y_{ij}| \leq k t_j,~|Y_{ij}|\leq t_j,~\ones^Tt\leq k,~t \leq \ones,\\
& \sum_{i=1}^n |Z_{ij}| \leq k r_j,~|Z_{ij}|\leq r_j,~\ones^T r\leq 1,\quad\mbox{for }i,j=1,\ldots,n,\\
& \left(\BA{cc}
X & Z^T\\
Z & Y\\
\EA\right)\succeq 0,
\EA
\EEQ
which is a semidefinite program in the variables $X,Y\in\symm_n$, $Z\in\reals^{n \times n}$ and $r,t\in\reals^n$. Adding these columnwise constraints on $Y$ and $Z$ significantly tightens the relaxation. Any {\em feasible} solution to the dual of (\ref{eq:max-max-relax-col}) with objective value less than $1/2$ will then be a certificate that $\alpha_k<1/2$.

\section{Tightness \& Limits of Performance}
\label{s:tight} The relaxation above naturally produces a covariance matrix as its output and we use randomization techniques as in \cite{Goem95} to produce primal solutions for problem (\ref{eq:max-max-pb}). Then, following results by A. Nemirovski (private communication), we bound the performance of the relaxation in~(\ref{eq:max-max-relax}). 

\subsection{Randomization}
Here, we show that lower bounds on $\alpha_k$ can be generated as a natural by-product of the relaxation. We use solutions to the semidefinite program in (\ref{eq:max-max-relax}) and generate feasible points to (\ref{eq:max-max-pb}) by randomization. These can then be used to certify that $\alpha_k>1/2$ and prove that a matrix does not satisfy the nullspace property. Suppose that the matrix
\BEQ\label{eq:gamma}
\Gamma=\left(\BA{cc}
X & Z^T\\
Z & Y\\
\EA\right)
\EEQ
solves problem (\ref{eq:max-max-relax}), because $\Gamma\succeq 0$, we can generate Gaussian variables $(x,y)\sim \mathcal{N}(0,\Gamma)$. Below, we show that after proper scaling, $(x,y)$ will satisfy the constraints of problem~(\ref{eq:max-max-pb}) with high probability, and use this result to quantify the quality of these randomized solutions. We begin by recalling classical results on the moments of $\|x\|_1$ and $\|x\|_\infty$ when $x\sim\mathcal{N}(0,X)$ and bound deviations above their means using concentration inequalities on Lipschitz functions of Gaussian variables.
\begin{lemma}\label{lem:dev-l1}
Let $X\in\symm_n$, $x\sim\mathcal{N}(0,X)$ and $\delta>0$, we have
\BEQ
\mathbf{P}\left(\frac{\|x\|_1}{(\sqrt{2/\pi}+\sqrt{2\log \delta})\sum_{i=1}^n \left( X_{ii} \right)^{1/2}}\geq 1\right)\leq \frac{1}{\delta}
\EEQ
\end{lemma}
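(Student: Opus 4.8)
The statement is a Gaussian concentration inequality, so the plan is to combine a computation of $\mathbf{E}\|x\|_1$ with a bound on the Lipschitz constant of $\|x\|_1$ seen as a function of a standard Gaussian vector, and then invoke the classical concentration inequality for Lipschitz functions of Gaussian variables. Writing $S=\sum_{i=1}^n X_{ii}^{1/2}$, the target is to show that $\|x\|_1$ does not exceed its mean $\sqrt{2/\pi}\,S$ by more than $\sqrt{2\log\delta}\,S$ except with probability at most $1/\delta$.

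First I would realize $x$ as $x=X^{1/2}g$ with $g\sim\mathcal{N}(0,\idm)$ standard normal in $\reals^n$, which is legitimate since $X\succeq 0$ (it arises here as a covariance block). Each coordinate satisfies $x_i\sim\mathcal{N}(0,X_{ii})$, and the standard formula $\mathbf{E}|Z|=\sqrt{2/\pi}\,\sigma$ for $Z\sim\mathcal{N}(0,\sigma^2)$ gives $\mathbf{E}|x_i|=\sqrt{2/\pi}\,X_{ii}^{1/2}$. Summing over $i$ yields $\mathbf{E}\|x\|_1=\sqrt{2/\pi}\,S$, which matches the first term in the claimed bound.

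Next I would bound the Lipschitz constant of $f(g)=\|X^{1/2}g\|_1$ with respect to the Euclidean norm on $g$. By the triangle inequality, $|f(g)-f(g')|\le\|X^{1/2}(g-g')\|_1=\sum_{i=1}^n |e_i^T X^{1/2}(g-g')|$, and since $X^{1/2}$ is symmetric, $e_i^T X^{1/2}(g-g')=(X^{1/2}e_i)^T(g-g')$, so Cauchy--Schwarz gives $|e_i^T X^{1/2}(g-g')|\le\|X^{1/2}e_i\|_2\,\|g-g'\|_2$ for each $i$. The key simplification is that $\|X^{1/2}e_i\|_2=(e_i^T X e_i)^{1/2}=X_{ii}^{1/2}$, so summing the coordinatewise bounds yields $|f(g)-f(g')|\le S\,\|g-g'\|_2$, i.e. $f$ is $L$-Lipschitz with $L\le S$. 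I expect this step to be the crux: it is tempting to bound the $\ell_1$ norm by $\sqrt{n}$ times the $\ell_2$ norm and then use $\|X^{1/2}\|_2$, but that produces a dimension-dependent constant; routing the estimate through the individual columns $X^{1/2}e_i$ is what produces the clean constant $S=\sum_i X_{ii}^{1/2}$ needed to match the statement.

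Finally I would apply the Gaussian concentration inequality $\mathbf{P}(f(g)\ge\mathbf{E}f(g)+t)\le\exp(-t^2/(2L^2))$, valid for any $L$-Lipschitz $f$. Taking $t=\sqrt{2\log\delta}\,S$ and using $L\le S$, the exponent satisfies $t^2/(2L^2)\ge (2\log\delta\cdot S^2)/(2S^2)=\log\delta$, so the tail probability is at most $e^{-\log\delta}=1/\delta$. Since $\mathbf{E}f(g)+t=(\sqrt{2/\pi}+\sqrt{2\log\delta})\,S$, this is exactly the asserted inequality after dividing through by $(\sqrt{2/\pi}+\sqrt{2\log\delta})\,S$.
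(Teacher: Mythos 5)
Your proposal is correct and follows essentially the same route as the paper's own proof: factor $x$ through the square root of $X$, bound the Lipschitz constant of $\|x\|_1$ as a function of the underlying standard Gaussian vector by $\sum_{i=1}^n (X_{ii})^{1/2}$ via coordinatewise estimates, compute $\Expect\|x\|_1 = \sqrt{2/\pi}\sum_{i=1}^n (X_{ii})^{1/2}$, and apply Gaussian concentration for Lipschitz functions with $t=\sqrt{2\log\delta}\,L$. The only cosmetic difference is that the paper bounds each term $\bigl|\sum_j P_{ij}u_j\bigr|$ as a Lipschitz function directly, whereas you phrase the same estimate through the triangle inequality and Cauchy--Schwarz on the columns of $X^{1/2}$; these are identical arguments.
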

\begin{proof}
Let $P$ be the square root of $X$ and $u_i\sim\mathcal{N}(0,1)$ be independent Gaussian variables, we have
\[
\|x\|_1=\sum_{i=1}^n \left| \sum_{j=1}^n P_{ij} u_j \right|
\]
hence, because each term $|\sum_{j=1}^n P_{ij} u_j |$ is a Lipschitz continuous function of the variables $u$ with constant $(\sum_{j=1}^n P_{ij}^2)^{1/2}=(X_{ii})^{1/2}$, $\|x\|_1$ is Lipschitz with constant $L=\sum_{i=1}^n \left( X_{ii} \right)^{1/2}$. Using the concentration inequality by \cite{Ibra76} (see also \citet{Mass07} for a general discussion) we get for any $\beta>0$
\[
\mathbf{P}\left(\frac{\|x\|_1}{\beta}\geq \frac{\Expect[\|x\|_1]+t}{\beta} \right)\leq \exp\left(-\frac{t^2}{2L^2}\right)
\]
with $\Expect[\|x\|_1]=\sqrt{2/\pi}\sum_{i=1}^n \left( X_{ii} \right)^{1/2}$. Picking $t=\sqrt{2\log \delta}L$ and $\beta=\Expect[\|x\|_1]+t$ yields the desired result.
\end{proof}

We now recall another classic result on the concentration of $\|y\|_\infty$, also based on the fact that $\|y\|_\infty$ is a Lipschitz continuous function of independent Gaussian variables.

\begin{lemma}\label{lem:dev-linf}
Let $Y\in\symm_n$, $y\sim\mathcal{N}(0,Y)$ and $\delta>0$ then
\BEQ
\mathbf{P}\left(\frac{\|y\|_\infty}{(\sqrt{2\log 2n}+\sqrt{2\log \delta })\max_{i=1,\ldots,n} (Y_{ii})^{1/2}} \geq 1\right) \leq \frac{1}{\delta}
\EEQ
\end{lemma}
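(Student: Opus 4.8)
The plan is to mirror the proof of Lemma~\ref{lem:dev-l1} almost verbatim, replacing the sum over coordinates by a maximum and adjusting the Lipschitz constant and the mean estimate accordingly. First I would write $y=Pu$, where $P$ is the (symmetric) square root of $Y$ and $u\sim\mathcal{N}(0,\idm)$, so that $\|y\|_\infty=\max_{i=1,\ldots,n}\,|\sum_{j=1}^n P_{ij}u_j|$. Each map $u\mapsto|\sum_j P_{ij}u_j|$ is Lipschitz in $u$ with constant $(\sum_j P_{ij}^2)^{1/2}=(Y_{ii})^{1/2}$, and since a pointwise maximum of Lipschitz functions is Lipschitz with constant equal to the largest of the individual constants, $\|y\|_\infty$ is Lipschitz in $u$ with constant $L=\max_{i}(Y_{ii})^{1/2}$.

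Next I would bound the mean. Writing $\|y\|_\infty=\max_i\max(y_i,-y_i)$ exhibits it as the maximum of $2n$ centered Gaussian variables, each with variance at most $\sigma^2\triangleq\max_i Y_{ii}$. The classical bound on the expected maximum of $N$ centered Gaussians with variance at most $\sigma^2$ then gives
\[
\Expect[\|y\|_\infty]\leq \sigma\sqrt{2\log(2n)},
\]
an estimate that uses only the marginal variances and is insensitive to the correlations encoded in $Y$.

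Finally I would invoke the same Gaussian concentration inequality for Lipschitz functions as in Lemma~\ref{lem:dev-l1} (from \cite{Ibra76}): for any $t>0$,
\[
\mathbf{P}\left(\|y\|_\infty\geq \Expect[\|y\|_\infty]+t\right)\leq \exp\left(-\frac{t^2}{2L^2}\right).
\]
Substituting the mean bound, using $L=\sigma$, and choosing $t=\sqrt{2\log\delta}\,\sigma$ collapses the exponent to $-\log\delta$ and yields
\[
\mathbf{P}\left(\|y\|_\infty\geq(\sqrt{2\log 2n}+\sqrt{2\log\delta})\,\sigma\right)\leq \frac{1}{\delta},
\]
which is the claimed inequality after recalling $\sigma=\max_i(Y_{ii})^{1/2}$.

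The Lipschitz step is a routine variant of the sum computation in Lemma~\ref{lem:dev-l1}, so the only real obstacle is the mean bound $\Expect[\|y\|_\infty]\leq\sigma\sqrt{2\log 2n}$: this is where the $\sqrt{2\log 2n}$ factor is produced, and it requires the standard subgaussian maximal inequality (bounding the moment generating function of each $y_i$ by $\exp(\lambda^2\sigma^2/2)$, applying Jensen to $\exp(\lambda\Expect[\max])$ against the union over the $2n$ variables, and optimizing over $\lambda$). The one detail to watch is the factor $2n$ rather than $n$, which arises from treating each $|y_i|$ as the pair $\{y_i,-y_i\}$.
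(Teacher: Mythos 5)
Your proof is correct and follows essentially the same route as the paper: the paper simply cites \citet[Theorem 3.12]{Mass07} for the Lipschitz property of $\|y\|_\infty$ and then says ``a reasoning similar to that in Lemma~\ref{lem:dev-l1} yields the desired result,'' which is exactly the concentration-plus-mean-bound argument you spell out. Your only additions are to derive the Lipschitz constant directly (max of Lipschitz functions) and to make the $\sqrt{2\log 2n}$ mean bound explicit via the subgaussian maximal inequality, both of which are the details the paper leaves implicit.
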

\begin{proof}
\citep[Theorem 3.12]{Mass07} shows that $\|y\|_\infty$ is a Lipschitz function of independent Gaussian random variables with constant $\max_{i=1,\ldots,n} (Y_{ii})^{1/2}$, hence a reasoning similar to that in lemma \ref{lem:dev-l1} yields the desired result.
\end{proof}

Using union bounds, the lemmas above show that if we pick $3/\delta <1$ and $(x,y)\sim \mathcal{N}(0,\Gamma)$, the scaled sample points
\[
\left(\frac{x}{g(X,\delta)},\frac{y}{h(Y,n,k,\delta)}\right)
\] 
will be feasible in~(\ref{eq:max-max-pb}) with probability at least $1-3/\delta$ if we set
\BEQ \label{eq:defg}
g(X,\delta)=(\sqrt{2/\pi}+\sqrt{2\log \delta})\sum_{i=1}^n \left( X_{ii} \right)^{1/2}
\EEQ
and
\BEQ \label{eq:defh}
h(Y,n,k,\delta)=\max\left\{(\sqrt{2\log 2n}+\sqrt{2\log \delta})\max_{i=1,\ldots,n} (Y_{ii})^{1/2}, \frac{(\sqrt{2/\pi}+\sqrt{2\log \delta})\sum_{i=1}^n \left( Y_{ii} \right)^{1/2}}{k} \right\}
\EEQ
The randomization technique is then guaranteed to produce a feasible point of~(\ref{eq:max-max-pb}) with objective value
\[
\frac{q_{\{1-3/\delta\}}}{g(X,\delta)h(Y,n,k,\delta)}
\]
where $q_{\{1-3/\delta\}}$ is the $1-3/\delta$ quantile of $x^Ty$ when $(x,y)\sim \mathcal{N}(0,\Gamma)$. We now compute a (relatively coarse) lower bound on the value of that quantile.

\begin{lemma}\label{lem:dev-xy}
Let $\epsilon,~\delta>3$ and $(x,y)\sim \mathcal{N}(0,\Gamma)$, with $\Gamma$ defined as in (\ref{eq:gamma}), then
\BEQ\label{eq:dev-xy}
\mathbf{P}\left( \sum_{i=1}^n x_iy_i \geq \Tr(Z) -\frac{\sqrt{3}}{\sqrt{\delta-3}}\sigma \right)\geq \frac{3}{\delta}
\EEQ
where
\[
\sigma^2=\|Z\|_F^2+\Tr(XY).
\]
\end{lemma}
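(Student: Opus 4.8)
The plan is to treat $W \triangleq \sum_{i=1}^n x_iy_i = x^Ty$ as a single scalar random variable, compute its mean and variance exactly, bound the variance by $\sigma^2$, and then control its lower tail with a one-sided Chebyshev (Cantelli) inequality.

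First I would read the first two moments of $W$ off the covariance $\Gamma$ in (\ref{eq:gamma}). Since $\Gamma$ is the covariance of the stacked Gaussian vector $(x,y)$, its blocks give $\mathbf{E}[x_iy_i]=Z_{ii}$, hence $\mathbf{E}[W]=\Tr(Z)$. For the second moment I would expand $\mathbf{E}[W^2]=\sum_{i,j}\mathbf{E}[x_iy_ix_jy_j]$ and apply the Gaussian fourth-moment (Wick/Isserlis) identity $\mathbf{E}[x_iy_ix_jy_j]=\mathbf{E}[x_iy_i]\mathbf{E}[x_jy_j]+\mathbf{E}[x_ix_j]\mathbf{E}[y_iy_j]+\mathbf{E}[x_iy_j]\mathbf{E}[y_ix_j]$ to each term. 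Reading the pairwise covariances off the blocks of $\Gamma$, namely $\mathbf{E}[x_ix_j]=X_{ij}$, $\mathbf{E}[y_iy_j]=Y_{ij}$, $\mathbf{E}[x_iy_j]=Z_{ji}$, $\mathbf{E}[y_ix_j]=Z_{ij}$, and summing over $i,j$, the first pairing reproduces $(\Tr Z)^2$, which cancels $(\mathbf{E}[W])^2$ in the variance, while the remaining two collapse to $\Tr(XY)+\Tr(Z^2)$. Thus $\mathrm{Var}(W)=\Tr(XY)+\Tr(Z^2)$. (One can cross-check this by writing $W=\tfrac12(x,y)^T M (x,y)$ with $M=\left(\begin{smallmatrix}0&\idm\\ \idm&0\end{smallmatrix}\right)$ and using $\mathrm{Var}(u^TAu)=2\Tr(A\Gamma A\Gamma)$.)

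The one slightly delicate point is connecting this exact variance to the quantity $\sigma^2=\|Z\|_F^2+\Tr(XY)$ in the statement, i.e. showing $\Tr(Z^2)\le\|Z\|_F^2$. I would split $Z$ into symmetric and antisymmetric parts $S=(Z+Z^T)/2$, $A=(Z-Z^T)/2$, for which the cross terms vanish under the trace, giving $\Tr(Z^2)=\Tr(S^2)+\Tr(A^2)$ and $\|Z\|_F^2=\Tr(Z^TZ)=\Tr(S^2)-\Tr(A^2)$. Since $A$ is antisymmetric its eigenvalues are purely imaginary, so $\Tr(A^2)\le0$ and therefore $\Tr(Z^2)\le\|Z\|_F^2$. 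Consequently $\mathrm{Var}(W)\le\sigma^2$; note also that $\Tr(XY)\ge0$ because $X,Y\succeq0$ as diagonal blocks of $\Gamma\succeq0$, so $\sigma^2$ is a genuine (nonnegative) upper bound on the variance.

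Finally I would invoke the one-sided Chebyshev (Cantelli) inequality $\mathbf{P}(W\le\mathbf{E}[W]-a)\le \mathrm{Var}(W)/(\mathrm{Var}(W)+a^2)$, valid for $a>0$. Choosing $a=\sqrt{3/(\delta-3)}\,\sigma$ and using that $v\mapsto v/(v+a^2)$ is increasing together with $\mathrm{Var}(W)\le\sigma^2$ gives $\mathbf{P}(W\le \Tr(Z)-a)\le \sigma^2/(\sigma^2+a^2)=1-3/\delta$, whose complement is exactly the claimed bound. I expect the main obstacle to be purely the bookkeeping of the fourth-moment expansion together with the inequality $\Tr(Z^2)\le\|Z\|_F^2$; it is worth noting that the \emph{two}-sided Chebyshev bound is too weak here (it would force $\delta\le3$), so the argument genuinely relies on the sharper one-sided form. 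The parameter $\epsilon$ in the hypothesis does not enter the proof.
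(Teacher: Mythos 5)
Your proposal is correct, and it follows the same two-step route as the paper: a Gaussian fourth-moment (Wick/Isserlis) computation for $x^Ty$, followed by Cantelli's one-sided inequality with $t=\sqrt{3/(\delta-3)}$. The one place where you genuinely depart from the paper turns out to be a point in your favor. The paper evaluates the Wick pairing as $Z_{ii}Z_{jj}+Z_{ij}^2+X_{ij}Y_{ij}$ and concludes $\Expect[(y^Tx)^2]=(\Tr(Z))^2+\|Z\|_F^2+\Tr(XY)$, i.e.\ it asserts that $\sigma^2$ \emph{equals} the variance. That evaluation uses $\Expect[x_iy_j]=Z_{ij}$, whereas under the paper's own convention $Z=\Expect[yx^T]$ one has $\Expect[x_iy_j]=Z_{ji}$, so the cross pairing is $Z_{ij}Z_{ji}$ and the exact variance is $\Tr(Z^2)+\Tr(XY)$, as you computed; the two coincide only when $Z$ is symmetric, which the relaxation does not enforce. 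Your two extra steps---the symmetric/antisymmetric split giving $\Tr(Z^2)\leq\|Z\|_F^2$, and the observation that Cantelli's bound $v/(v+a^2)$ is increasing in the variance $v$, so an upper bound on the variance suffices---are precisely what is needed to justify the lemma as stated with $\sigma^2=\|Z\|_F^2+\Tr(XY)$. In this sense your write-up is a corrected and slightly strengthened version of the paper's argument: the lemma remains true (indeed it would hold with the smaller deviation $\sqrt{\Tr(Z^2)+\Tr(XY)}$, though that quantity is less convenient to bound a priori, which is presumably why the paper works with $\|Z\|_F$). Your closing remarks also match the paper's usage: the two-sided Chebyshev bound would indeed be too weak to reach $1-3/\delta$, and the parameter $\epsilon$ plays no role in the lemma itself---it only enters later, in Theorem~\ref{th:tightness}, where $\delta$ is chosen as a function of $\epsilon$.
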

\begin{proof}
Let $S\in\reals^{2n \times 2n}$ be such that $\Gamma=S^TS$ and $(x,y)\sim \mathcal{N}(0,\Gamma)$, we have
\[
\Expect\left[\left(y^Tx\right)^2\right] = \sum_{i,j=1}^n \Expect\left[ (S_i^Tw)(S_{n+i}^Tw) (S_j^Tw)(S_{n+j}^Tw)\right]
\]
where $w$ is a standard normal vector of dimension $2n$. Wick's formula implies
\BEAS
\Expect\left[ (S_i^Tw)(S_{n+i}^Tw) (S_j^Tw)(S_{n+j}^Tw)\right]
& = &\mathop{\bf Haf} \left(\BA{cccc}
X_{ii} & Z_{ii} & X_{ij} & Z_{ij}\\
Z_{ii} & Y_{ii} & Z_{ij} & Y_{ij} \\
X_{ij} & Z_{ij} & X_{jj} & Z_{jj} \\
Z_{ij} & Y_{ij} & Z_{jj} & Y_{jj} \\
\EA\right)\\
& =& Z_{ii}Z_{jj} + Z_{ij}^2 + X_{ij}Y_{ij},
\EEAS
where $\mathop{\bf Haf}(X)$ is the Hafnian of the matrix $X$ (see \cite{Barv07} for example), which means
\[
\Expect\left[(y^Tx)^2\right] = (\Tr(Z))^2+\|Z\|_F^2 +\Tr(XY).
\]
Because $\Expect[y^Tx]=\Expect[\Tr(xy^T)]=\Tr(\Expect[xy^T])=\Tr(Z)$, we then conclude using Cantelli's inequality, which gives
\[
\mathbf{P}\left( \sum_{i=1}^n x_iy_i \leq \Tr(Z) -t\sigma \right)\leq \frac{1}{1+t^2}
\]
having set $t=\sqrt{3}/\sqrt{\delta-3}$.
\end{proof}

We can now combine these results to produce a lower bound on the objective value achieved by randomization. 

\begin{theorem}\label{th:tightness}
Given $A\in\reals^{m \times n}$, $\epsilon>0$ and $k>0$, writing $SDP_k$ the optimal value of (\ref{eq:max-max-relax}), we have
\BEQ\label{eq:quality}
\frac{SDP_k-\epsilon}{g(X,\delta)h(Y,n,k,\delta)} \leq \alpha_k \leq SDP_k
\EEQ
where 
\[
\delta=3+\frac{3(\|Z\|_F^2+\Tr(XY))}{\epsilon^2}.
\]
\[
g(X,\delta)=(\sqrt{2/\pi}+\sqrt{2\log \delta})\sum_{i=1}^n \left( X_{ii} \right)^{1/2}
\]
and
\[
h(Y,n,k,\delta)=\max\left\{(\sqrt{2\log 2n}+\sqrt{2\log \delta})\max_{i=1,\ldots,n} (Y_{ii})^{1/2}, \frac{(\sqrt{2/\pi}+\sqrt{2\log \delta})\sum_{i=1}^n \left( Y_{ii} \right)^{1/2}}{k} \right\}
\]
\end{theorem}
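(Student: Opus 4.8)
The plan is to establish the right inequality $\alpha_k \leq SDP_k$ directly from how the relaxation was built, and to obtain the left inequality by a randomization argument that turns the optimal solution $\Gamma$ of (\ref{eq:max-max-relax}) into a feasible point of the original problem (\ref{eq:max-max-pb}) with a controlled objective value. The upper bound is immediate: problem (\ref{eq:max-max-relax}) is obtained from (\ref{eq:max-max-pb}) by lifting to $\Gamma$ as in (\ref{eq:gamma}) and dropping only the rank-one constraint, so any feasible $(x,y)$ of (\ref{eq:max-max-pb}) produces a rank-one feasible point of (\ref{eq:max-max-relax}) with the same objective $\Tr(Z)=y^Tx$; hence $SDP_k \geq \alpha_k$.

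For the lower bound I would fix the optimal $\Gamma$ with blocks $X,Y,Z$, so that $\Tr(Z)=SDP_k$ and $\sigma^2=\|Z\|_F^2+\Tr(XY)$, and draw $(x,y)\sim\mathcal{N}(0,\Gamma)$. First I would verify feasibility of the scaled point $(x/g,y/h)$ in (\ref{eq:max-max-pb}). The equality constraint $Ax=0$ holds almost surely, since $\Expect\|Ax\|^2=\Tr(AXA^T)=0$ by the constraint $AXA^T=0$ together with $Ax$ being Gaussian of zero covariance. The three remaining norm constraints $\|x\|_1\leq g$, $\|y\|_\infty\leq h$ and $\|y\|_1\leq kh$ each fail with probability at most $1/\delta$: the first and third follow from Lemma \ref{lem:dev-l1} applied to $x$ and to $y$ respectively, the second from Lemma \ref{lem:dev-linf}, and the definition of $h$ in (\ref{eq:defh}) as a maximum of two terms is precisely what makes both $\|y\|_\infty\leq 1$ and $\|y\|_1\leq k$ hold simultaneously after scaling by $h(Y,n,k,\delta)$. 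A union bound over these three events then shows that the scaled point is feasible with probability at least $1-3/\delta$.

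Next I would control the objective. The choice $\delta = 3 + 3\sigma^2/\epsilon^2$ is engineered so that $\tfrac{\sqrt{3}}{\sqrt{\delta-3}}\sigma = \epsilon$, whence Lemma \ref{lem:dev-xy} gives $x^Ty \geq \Tr(Z)-\epsilon = SDP_k-\epsilon$ with probability at least $3/\delta$. Since the scaled objective equals $(y/h)^T(x/g)=x^Ty/\big(g(X,\delta)h(Y,n,k,\delta)\big)$, on this event it is at least $(SDP_k-\epsilon)/(gh)$. Letting $F$ denote the feasibility event and $G$ the objective event, the two estimates $\mathbf{P}(F)\geq 1-3/\delta$ and $\mathbf{P}(G)\geq 3/\delta$ force $\mathbf{P}(F\cap G)>0$, so there exists a realization that is at once feasible for (\ref{eq:max-max-pb}) and has scaled objective at least $(SDP_k-\epsilon)/(gh)$. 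Because $\alpha_k$ is the maximum of the objective over the feasible set, this deterministic lower bound on $\alpha_k$ follows, completing (\ref{eq:quality}).

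The hard part will be the very last step, namely ensuring $\mathbf{P}(F\cap G)$ is \emph{strictly} positive: the Bonferroni estimate $\mathbf{P}(F\cap G)\geq \mathbf{P}(F)+\mathbf{P}(G)-1$ only yields $\geq 0$ because the two probability budgets $1-3/\delta$ and $3/\delta$ were calibrated to sum to exactly one. I would close this gap by exploiting that the concentration bounds of Lemmas \ref{lem:dev-l1}--\ref{lem:dev-linf} and the Cantelli inequality used in Lemma \ref{lem:dev-xy} are all strict for the continuous law of $(x,y)$, so at least one budget is met with strict inequality; equivalently, one phrases the conclusion through the $(1-3/\delta)$-quantile of $x^Ty$, which is at least $SDP_k-\epsilon$ by (\ref{eq:dev-xy}) and is attained on the feasibility event. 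The only other point needing care is the feasibility reduction itself: replacing $\|x\|_1=1$ and $\|y\|_\infty=1$ by the inequalities $\|x\|_1\leq 1$, $\|y\|_\infty\leq 1$ is harmless because the inner maximization in (\ref{eq:max-max-pb}) is linear and therefore attained at an extreme point of the scaled constraint set.
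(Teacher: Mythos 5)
Your proposal is correct and follows essentially the same route as the paper's own proof: sample $(x,y)\sim\mathcal{N}(0,\Gamma)$, note that $AXA^T=0$ forces $Ax=0$ almost surely, use Lemmas \ref{lem:dev-l1} and \ref{lem:dev-linf} with a union bound to get feasibility of the scaled pair with probability at least $1-3/\delta$, and use Lemma \ref{lem:dev-xy} with $\delta=3+3\sigma^2/\epsilon^2$ to get the objective bound $\Tr(Z)-\epsilon$ with probability at least $3/\delta$, the upper bound $\alpha_k\leq SDP_k$ being immediate from the lifting/rank-dropping construction. Your closing observation---that the two probability budgets sum to exactly one, so Bonferroni alone gives only $\mathbf{P}(F\cap G)\geq 0$ and one must invoke strictness of the Gaussian concentration and Cantelli bounds (or a quantile formulation) to conclude---is a genuine subtlety that the paper's proof passes over silently, and your patch for it is sound.
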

\begin{proof}
If $\Gamma$ solves~(\ref{eq:max-max-relax}) and the vectors $(x,y)$ are sampled according to $(x,y)\sim \mathcal{N}(0,\Gamma)$, then 
\[
\Expect[(Ax)(Ax)^T]=\Expect[Axx^TA^T]=AXA^T=0,
\]
means that we always have $Ax=0$. When $\delta>3$, Lemmas~\ref{lem:dev-l1} and~\ref{lem:dev-linf} show that
\[
\left(\frac{x}{g(X,\delta)},\frac{y}{h(Y,n,k,\delta)}\right)
\] 
will be feasible in~(\ref{eq:max-max-pb}) with probability at least $1-3/\delta$, hence we can get a feasible point for~(\ref{eq:max-max-pb}) by sampling enough variables $(x,y)$. Lemma \ref{lem:dev-xy} shows that if we set $\delta$ as above, the randomization procedure is guaranteed to reach an objective value $y^Tx$ at least equal to
\[
\frac{\Tr(Z)-\epsilon}{g(X,\delta)h(Y,n,k,\delta)}
\]
which is the desired result.
\end{proof}

Note that because $\Gamma \succeq 0$, we have $Z_{ij}^2\leq X_{ii}Y_{jj}$, hence $\|Z\|_F^2\leq \Tr(X)\Tr(Y)\leq k^2$. We also have $\Tr(XY)\leq \|X\|_1 \|Y\|_1 \leq k^2$ hence
\[
\delta \leq 3+\frac{6k^2}{\epsilon^2}.
\]
and the only a priori unknown terms controlling tightness are $\sum_{i=1}^n (X_{ii})^{1/2}$, $\sum_{i=1}^n (Y_{ii})^{1/2}$ and $\max_{i=1,\ldots,n} (Y_{ii})^{1/2}$. Unfortunately, while the third term is bounded by one, the first two can become quite large, with trivial bounds giving
\[
\sum_{i=1}^n (X_{ii})^{1/2}\leq \sqrt{n} \quad \mbox{and} \quad \sum_{i=1}^n (Y_{ii})^{1/2}\leq \sqrt{n},
\]
which means that, in the worst case, our lower bound will be off by a factor $1/n$. However, we will observe in Section \ref{s:numres} that, when $k=1$, these terms are sometimes much lower than what the worst-case bounds seem to indicate. The expression for the tightness coefficient $\gamma$ in~(\ref{eq:gamma}) also highlights the importance of the constraint $\|Z\|_1\leq k$. Indeed, the positive semidefinitess of $2\times 2$ principal submatrices means that $Z_{ij}^2\leq X_{ii}Y_{jj}$, hence
\[
\|Z\|_1 \leq \left(\sum_{i=1}^n (X_{ii})^{1/2}\right)\left(\sum_{i=1}^n (Y_{ii})^{1/2}\right),
\]
so controlling $\|Z\|_1$ potentially tightens the relaxation. This is confirmed in numerical experiments: the relaxation including the (initially) redundant norm constraint on $Z$ is significantly tighter on most examples. Finally, note that better lower bounds on $\alpha_k$ can be obtained (numerically) by sampling $\|x_T\|_1/\|x\|_1$ in (\ref{eq:ineq-C}) directly, or as suggested by one of the referees, solving
\[\BA{ll}
\mbox{maximize} & c^Tx\\
\mbox{subject to} & Ax=0,~ \|x\|_1\leq 1,
\EA\]
in $x\in\reals^n$ for various random vectors $c\in\{-1,0,1\}^n$ with at most $k$ nonzero coefficients. In both cases unfortunately, the moments cannot be computed explicitly so studying performance is much harder.

\subsection{Performance}
Following results by A. Nemirovski (private communication), we can derive precise bounds on the performance of the relaxation in~(\ref{eq:max-max-relax}). 
\begin{lemma}
Suppose $(X,Y,Z)$ solve the semidefinite program in~(\ref{eq:max-max-relax}), then
\[
\Tr(Z)=\alpha_1
\]
and the relaxation is tight for $k=1$.
\end{lemma}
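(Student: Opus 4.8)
The plan is to establish the two inequalities $\alpha_1\le \Tr(Z)\le\alpha_1$ separately, where $\Tr(Z)$ now denotes the optimal value $SDP_1$ of (\ref{eq:max-max-relax}) at $k=1$. The first thing I would record is that for $k=1$ the inner maximization in (\ref{eq:max-max-pb}) collapses: the constraints $\|y\|_\infty=1$ and $\|y\|_1\le1$ together force $y$ to be a signed unit vector $\pm e_j$, so that $\alpha_1=\max\{\|x\|_\infty:~Ax=0,~\|x\|_1\le1\}$. By homogeneity this says that every $w$ in the nullspace of $A$ obeys the scalar inequality $\|w\|_\infty\le\alpha_1\|w\|_1$, and this is the only property of $\alpha_1$ I will use.

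For the lower bound $\Tr(Z)\ge\alpha_1$ I would simply exhibit a rank-one feasible point. Take $x$ attaining $\alpha_1$ with $\|x\|_1=1$, let $y=\pm e_j$ select its largest-magnitude coordinate, and set $(X,Y,Z)=(xx^T,yy^T,yx^T)$. One verifies directly that this triple is feasible for (\ref{eq:max-max-relax}) at $k=1$: in particular $AXA^T=(Ax)(Ax)^T=0$, $\|X\|_1=\|x\|_1^2=1$, $\|Y\|_\infty=\|Y\|_1=1$, and the (redundant) constraint holds as $\|Z\|_1=\|x\|_1\|y\|_1=1\le k$. Its objective is $\Tr(yx^T)=x^Ty=\|x\|_\infty=\alpha_1$, so the relaxation value is at least $\alpha_1$.

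The reverse bound is the substance of the lemma, and the key step I would isolate is that the rows of $Z$ lie in the nullspace of $A$. Factoring the feasible $\Gamma$ of (\ref{eq:gamma}) as $\Gamma=CC^T$ with $C=(C_1;C_2)$, the constraint $AXA^T=0$ reads $(AC_1)(AC_1)^T=0$, hence $AC_1=0$ and therefore $AZ^T=AC_1C_2^T=0$. Writing $r_j$ for the $j$-th row of $Z$, this gives $Ar_j=0$, so the scalar inequality above applies to each $r_j$ and yields $|Z_{jj}|=|(r_j)_j|\le\|r_j\|_\infty\le\alpha_1\|r_j\|_1$. Summing over $j$ and using $\|Z\|_1=\sum_j\|r_j\|_1\le k=1$ then gives $\Tr(Z)\le\sum_j|Z_{jj}|\le\alpha_1\|Z\|_1\le\alpha_1$, which closes the argument and proves $SDP_1=\alpha_1$.

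The main obstacle is not computational but conceptual: one must resist the temptation to bound $\Tr(Z)=\sum_jZ_{jj}$ using only positive semidefiniteness (which, through $Z_{jj}^2\le X_{jj}Y_{jj}$ and $\Tr X,\Tr Y\le1$, yields merely $\Tr(Z)\le1$ and never sees the nullspace). The right move is to bound each diagonal entry $Z_{jj}$ by the $\ell_1$ norm of its own row via the nullspace property, which is possible precisely because each row of $Z$ is a nullspace vector. Note also that it is exactly the initially redundant constraint $\|Z\|_1\le k$ that converts these per-row bounds into the global estimate $\alpha_1$ — consistent with the emphasis placed on that constraint earlier in the paper.
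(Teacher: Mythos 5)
Your proof is correct and follows essentially the same route as the paper's: the paper likewise derives the upper bound by noting that the rows of $Z$ lie in the nullspace of $A$ (so $|Z_{ii}|\leq \alpha_1\sum_j |Z_{ij}|$) and then invoking $\|Z\|_1\leq k=1$, and it asserts $\Tr(Z)\geq\alpha_1$ ``by construction'' since the SDP is a relaxation. The only difference is that you spell out two steps the paper leaves implicit --- the factorization $\Gamma=CC^T$ showing $AZ^T=0$, and the explicit rank-one feasible point $(xx^T,yy^T,yx^T)$ certifying the lower bound --- both of which are accurate.
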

\begin{proof}
First, notice that when the matrices $(X,Y,Z)$ solve (\ref{eq:max-max-relax}), $AX=0$ with 
\[
\left(\BA{cc}
X & Z^T\\
Z & Y\\
\EA\right)\succeq 0
\]
means that the rows of $Z$ also belong to the nullspace of $A$. If $A$ satisfies the nullspace property in (\ref{eq:ineq-C}), we must have $|Z_{ii}|\leq \alpha_1 \sum_{j=1}^n |Z_{ij}|$ for $i=1,\ldots,n,$ hence $\Tr(Z)\leq \alpha_1 \|Z\|_1 \leq \alpha_1$. By construction, we always have $\Tr(Z) \geq \alpha_1$ hence $\Tr(Z)=\alpha_1$ when $Z$ solves~(\ref{eq:max-max-relax}) with $k=1$.
\end{proof}

As in \cite{Judi08}, this also means that if a matrix $A$ satisfies the restricted isometry property at cardinality $O(m)$ (as Gaussian matrices do for example), then the relaxation in~(\ref{eq:max-max-relax}) will certify $\alpha_k<1/2$ for $k=O(\sqrt{m})$. Unfortunately, the results that follow show that this is the best we can hope for here.

Without loss of generality, we can assume that $n=2m$ (if $n\geq 2m$, the problem is harder). Let $Q$ be an orthoprojector on a $(n-m)$-dimensional subspace of the nullspace of $A$, with $\Rank(Q)=n-m=m$. By construction, $\|Q\|_1\leq n \|Q\|_2 = n \sqrt{m}$, $0\preceq Q \preceq \idm$ and of course $AQ=0$. We can use this matrix to construct a feasible solution to problem~(\ref{eq:max-max-relax-col}) when $k=\sqrt{n}$. We set $X=Q/(n\sqrt{m})$, $Y=Q/\sqrt{n}$, $Z=Q/n$, $t_j=1/\sqrt{n}$ and $r_j=1/n$ for $j=1,\ldots,n$. We then have
\[
\|Y_i\|_1=\frac{\|Q_i\|_1}{\sqrt{n}} \leq {\|Q_i\|_2}\leq 1 \leq k t_i, \quad i=1,\ldots,n,
\]
and $\|Y_i\|_\infty\leq \|Y_i\|_2 \leq 1/\sqrt{n}$ with $\ones^Tt\leq k$. We also get
\[
\|Z_i\|_1=\frac{\|Q_i\|_1}{n} \leq \frac{\|Q_i\|_2}{\sqrt{n}} \leq k r_i, \quad i=1,\ldots,n.
\]
With
\[
\left(\BA{cc}
n^{-1}m^{-1/2} & n^{-1}\\
n^{-1} & n^{-1/2}\\
\EA\right)\succeq 0,
\]
the matrices we have defined above form a {\em feasible point} of problem~(\ref{eq:max-max-relax-col}). Because, $\Tr(Z)=\Tr(Q)/n=1/2$, this feasible point proves that the optimal value of~(\ref{eq:max-max-relax-col}) is larger than $1/2$ when $n=2m$ and $k=\sqrt{n}$. This means that the relaxation in~(\ref{eq:max-max-relax-col}) can prove that a matrix satisfies the nullspace property for cardinalities at most $k=O(\sqrt{n})$ and this performance bound is tight since we have shown that it achieves this rate of $O(\sqrt{n})$ for good matrices.

This counter example also produces bounds on the performance of another relaxation for testing sparse recovery. In fact, if we set $X=Q/m$ with $Q$ defined as above, we have $\Tr(X)=1$ with $X\succeq 0$ and 
\[
\|X\|_1=\frac{\|Q\|_1}{m}\leq 2 \sqrt{m}
\]
and $X$ is an optimal solution of the problem
\[\BA{ll}
\mbox{minimize} & \Tr(XAA^T)\\
\mbox{subject to} & \|X\|_1 \leq 2 \sqrt{2m}\\
& \Tr(X)=1,~X\succeq 0,
\EA\]
which is a semidefinite relaxation used in \cite{dAsp04a} and \cite{dAsp08b} to bound the restricted isometry constant $\delta_k(A)$. Because $\Tr(XAA^T)=0$ by construction, we know that this last relaxation will fail to show $\delta_k(A)<1$ whenever $k=O(\sqrt{m})$. Somewhat strikingly, this means that the three different tractable tests for sparse recovery conditions, derived in \cite{dAsp08b}, \cite{Judi08} and this paper, are {\em all} limited to showing recovery at the (suboptimal) rate $k=O(\sqrt{m})$. 

\section{Algorithms}
\label{s:algos} Small instances of the semidefinite program in (\ref{eq:dual-ker}) and be solved efficiently using solvers such as SEDUMI \citep{Stur99} or SDPT3 \citep{Toh96}. For larger instances, it is more advantageous to solve (\ref{eq:dual-ker}) using first order techniques, given a fixed target for $\alpha$. We set $P\in\reals^{n\times p}$ to be an orthogonal basis of the nullspace of  the matrix $A$ in (\ref{eq:ineq-C}), i.e. such that $AP=0$ with $P^TP=\idm$. We also let $\bar \alpha$ be a target critical value for $\alpha$ (such as $1/2$ for example), and solve the following problem
\BEQ\label{eq:dual-binary}
\BA{ll}
\mbox{maximize} & \lambdamin \left(\BA{cc}
P^T U_1 P & -\frac{1}{2} P^T ({\idm} + U_4)\\
-\frac{1}{2}  ({\idm} + U_4^T)P & U_2+U_3\\
\EA\right)\\
\mbox{subject to} & \|U_1\|_\infty + k^2 \|U_2\|_\infty + \|U_3\|_1 + k\|U_4\|_\infty \leq \bar \alpha\\
\EA
\EEQ
in the variables $U_1,U_2,U_3\in\symm_n$ and $U_4\in\reals^{n \times n}$. If the objective value of this last problem is greater than zero, then the optimal value of problem (\ref{eq:dual-ker}) is necessarily smaller than $\bar \alpha$, hence $\alpha \leq \bar \alpha$ in (\ref{eq:max-max-pb}).

Because this problem is a minimum eigenvalue maximization problem over a simple compact (a norm ball in fact), large-scale instances can be solved efficiently using projected gradient algorithms or smooth semidefinite optimization techniques \citep{Nest04a,dAsp04a}. As we show below, the complexity of projecting on this ball is quite low.


\begin{lemma}\label{lem:proj}
The complexity of projecting $(x_0,y_0,z_0,w_0)\in\reals^{3n}$ on
\[
\|x\|_\infty + k^2 \|y\|_\infty + \|z\|_1 + k\|w\|_\infty \leq \alpha
\]
is bounded by $O(n\log n\log_2 (1/\epsilon))$, where $\epsilon$ is the target precision in projecting.
\end{lemma}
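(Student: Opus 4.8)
The plan is to recognize the projection as a convex program with a \emph{single} scalar constraint coupling four otherwise independent blocks, and to solve it by dualizing that one constraint. Writing the projection of $(x_0,y_0,z_0,w_0)$ explicitly as
\[
\min ~ \textstyle\frac12\|x-x_0\|_2^2+\frac12\|y-y_0\|_2^2+\frac12\|z-z_0\|_2^2+\frac12\|w-w_0\|_2^2
\]
subject to $\|x\|_\infty+k^2\|y\|_\infty+\|z\|_1+k\|w\|_\infty\leq\alpha$, I would attach a single multiplier $\lambda\geq0$ to the norm constraint. Because the objective is separable across the four blocks (each living in $\reals^n$) and the constraint is a sum of terms each involving only one block, minimizing the Lagrangian for a \emph{fixed} $\lambda$ splits into four independent proximal problems: $x^\star(\lambda)=\mathop{\bf prox}_{\lambda\|\cdot\|_\infty}(x_0)$, analogously $y^\star(\lambda)$ and $w^\star(\lambda)$ with weights $\lambda k^2$ and $\lambda k$, and $z^\star(\lambda)=\mathop{\bf prox}_{\lambda\|\cdot\|_1}(z_0)$.

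The key computational step is to show that each of these four evaluations costs $O(n\log n)$. For the $\ell_1$ block this is immediate, since $\mathop{\bf prox}_{\mu\|\cdot\|_1}$ is coordinatewise soft-thresholding and runs in $O(n)$. For the three $\ell_\infty$ blocks I would invoke the Moreau decomposition: since the conjugate of $\mu\|\cdot\|_\infty$ is the indicator of the $\ell_1$ ball of radius $\mu$, we have $\mathop{\bf prox}_{\mu\|\cdot\|_\infty}(v)=v-P_{\mu}(v)$, where $P_{\mu}$ denotes Euclidean projection onto that $\ell_1$ ball. Projection onto an $\ell_1$ ball is a classical routine that, via sorting the coordinate magnitudes followed by a linear-time threshold search, costs $O(n\log n)$. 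Hence evaluating all four blocks at a given $\lambda$ costs $O(n\log n)$.

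It remains to locate the correct multiplier. Define $\phi(\lambda)=\|x^\star(\lambda)\|_\infty+k^2\|y^\star(\lambda)\|_\infty+\|z^\star(\lambda)\|_1+k\|w^\star(\lambda)\|_\infty$. Each proximal map shrinks its argument monotonically as its weight grows, so $\phi$ is continuous and nonincreasing in $\lambda$, with $\phi(\lambda)\to0$ as $\lambda\to\infty$. If $\phi(0)\leq\alpha$ the input is already feasible and the projection is trivial; otherwise complementary slackness forces $\phi(\lambda)=\alpha$ at the optimum, and monotonicity of $\phi$ lets me pin down this root by bisection on the scalar $\lambda$. Reaching accuracy $\epsilon$ takes $O(\log_2(1/\epsilon))$ bisection steps, each costing one evaluation of $\phi$, i.e.\ $O(n\log n)$ work, for the claimed $O(n\log n\log_2(1/\epsilon))$ total.

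I expect the main obstacle to be the $\ell_\infty$-block proximal evaluation: getting the Moreau identity and the weight bookkeeping right, and justifying the $O(n\log n)$ $\ell_1$-ball projection it reduces to, is where the real content lies, whereas the monotonicity-plus-bisection argument and the soft-thresholding step are routine. A secondary subtlety is that $\phi$ may be only weakly (not strictly) monotone on flat pieces; continuity together with nonincreasingness is nonetheless enough to guarantee that bisection converges to a $\lambda$ achieving $\phi(\lambda)=\alpha$ to the required tolerance.
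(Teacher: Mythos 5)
Your proposal is correct and follows essentially the same route as the paper's proof: dualize the single coupling constraint, observe that for fixed $\lambda$ the Lagrangian splits into four separable penalized least-squares (prox) problems solvable in $O(n\log n)$ each by shrinkage, and recover the optimal multiplier by binary search in $O(\log_2(1/\epsilon))$ steps. You merely flesh out details the paper leaves implicit (the Moreau-decomposition reduction of the $\ell_\infty$ prox to an $\ell_1$-ball projection, and the monotonicity of the constraint value in $\lambda$, which also follows from concavity of the dual function), so no substantive difference remains.
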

\begin{proof}
By duality, solving
\[\BA{ll}
\mbox{minimize} &  \|x-x_0\|^2 + \|y-y_0\|^2 + \|z-z_0\|^2 + \|w-w_0\|^2\\
\mbox{subject to} & \|x\|_\infty + k^2 \|y\|_\infty + \|z\|_1 + k\|w\|_\infty \leq \alpha\\
\EA\]
in the variables $x,y,z \in \reals^n$ is equivalent to solving
\[
\max_{\lambda \geq 0} \min_{x,y,z,w} \|(x,y,z,w)-(x_0,y_0,z_0,w_0)\|^2 + \lambda \|x\|_\infty + \lambda k^2 \|y\|_\infty +   \lambda \|z\|_1 +  \lambda k \|w\|_\infty - \lambda \alpha
\]
in the variable $\lambda\geq 0$. For a fixed $\lambda$, we can get the derivative w.r.t. $\lambda$ by solving four separate penalized least-squares problems. Each of these problems can be solved explicitly in at most $O(n\log n)$ (by shrinking the current point) so the complexity of solving the outer maximization problem up to a precision $\epsilon>0$ by binary search is $O(n\log n\log_2 (1/\epsilon))$
\end{proof}

We can then implement the smooth minimization algorithm detailed in \cite[\S5.3]{Nest03} to a smooth approximation of problem (\ref{eq:dual-binary}) as in \cite{Nest04a} or \cite{dAsp04a} for example. Let $\mu>0$ be a regularization parameter. The function 
\BEQ\label{eq:fmu}
f_\mu(X) = \mu\log\left( \Tr  \exp\left(\frac{X}{\mu}\right)\right)
\EEQ
satifies
\[
\lambdamax(X) \leq f_\mu(X)  \leq \lambdamax(X) + \mu \log n
\]
for any $X\in\symm_n$. Furthermore, $f_\mu(X)$ is a smooth approximation of the function $\lambdamax(X)$, and $\nabla f_\mu(X)$ is Lipschitz continuous with constant $\log n/\mu$. Let $\epsilon>0$ be a given target precision, this means that if we set $\mu=\epsilon/(2 \log n)$ then
\BEQ\label{eq:f-def}
f(U)\equiv -f_\mu
\left(\BA{cc}
-P^T U_1 P & \frac{1}{2} P^T ({\idm} + U_4)\\
\frac{1}{2}  ({\idm} + U_4^T)P & -(U_2+U_3)\\
\EA\right)
\quad \mbox{where} \quad U=(U_1,U_2,U_3,U_4),
\EEQ
will be an $\epsilon/2$ approximation of the objective function in (\ref{eq:dual-binary}). Whenever $\|U\|_F\leq 1$, we must have
\[
\left\|\left(\BA{cc}
-P^T U_1 P &  P^T U_4/2 \\
U_4^T P/2 & -(U_2+U_3)\\
\EA\right)\right\|_2^2 \leq \|P^TU_1P\|_2^2+ \|U_2+U_3\|_2^2 + \|P^TU_4\|_2^2 \leq 4,
\]
hence, following \cite[\S4]{Nest04a}, the gradient of $f(U)$ is Lipschitz continuous with respect to the Frobenius norm, with Lipschitz constant given by
\[
L=\frac{8 \log (n+p)}{\epsilon},
\]
We then define the compact, convex set $Q$ as
\[
Q\equiv\left\{(U_1,U_2,U_3,U_4) \in\symm_n^3:\|U_1\|_\infty + k^2 \|U_2\|_\infty + \|U_3\|_1 + k \|U_4\|_\infty \leq \bar \alpha\right\},
\]
and define a prox function $d(U)$ over $Q$ as $d(U)=\|U\|_F^2/2$, which is strongly convex with constant $\sigma=1$ w.r.t. the Frobenius norm. Starting from $U_0=0$, the algorithm in \cite{Nest03} for solving
\[\BA{ll}
\mbox{maximize} & f(U)\\
\mbox{subject to} & U \in Q, 
\EA\]
where $f(U)$ is defined in (\ref{eq:f-def}), proceeds as follows.

\vskip .1in
\noindent {\bf Repeat:}
\begin{enumerate} \itemsep 0ex
\item Compute $f(U_j)$ and $\nabla f(U_j)$
\item Find $Y_j = \arg\min_{Y \in Q} \: \langle \nabla f(U_j) , Y \rangle + \frac{1}{2} L \|U_i-Y\|_F^2$
\item Find $W_j = \arg\min_{W\in Q} \left\{ \frac{L d(W)}{\sigma}  + \sum_{j=0}^i \frac{j+1}{2}(f(U_j)+\langle \nabla f(U_j),W-U_j \rangle ) \right\}$
\item Set $U_{j+1} = \frac{2}{j+3} W_j + \frac{j+1}{j+3} Y_j$
\end{enumerate}
\noindent {\bf Until} gap $\leq\epsilon$.
\vskip .1in

Step one above computes the (smooth) function value and gradient. The second step computes the \emph{gradient mapping}, which matches the gradient step for unconstrained problems (see \cite[p.86]{Nest03a}). Step three and four update an \emph{estimate sequence} see \cite[p.72]{Nest03a} of $f$ whose minimum can be computed explicitly and gives an increasingly tight upper bound on the minimum of $f$. We now present these steps in detail for our problem.

\paragraph{Step 1}
The most expensive step in the algorithm is the first, the computation of $f$ and its gradient. This amounts to computing the matrix exponential in (\ref{eq:fmu}) at a cost of $O(n^3)$ (see \cite{Mole03} for details).
\paragraph{Step 2}
This step involves solving a problem of the form
\[
\argmin_{Y \in Q} \: \langle \nabla f(U) , Y \rangle + \frac{1}{2} L \|U-Y\|_F^2 ,
\]
where $U$ is given.  The above problem can be reduced to an Euclidean projection on $Q$
\begin{equation}\label{eq:euclo-prox}
\argmin_{\|Y\|\in Q} \: \|Y - V\|_F,
\end{equation}
where $V = U + L^{-1}\nabla f_\mu(U)$ is given. According to Lemma \ref{lem:proj}, this can be solved  $O(n\log n\log_2 (1/\epsilon))$ opearations.

\paragraph{Step 3} The third step involves solving an Euclidean projection problem similar to (\ref{eq:euclo-prox}), with $V$ defined here by:
\[
V = \frac{\sigma}{L} \sum_{j=0}^i \frac{j+1}{2} \nabla f_\mu(U_j).
\]

\paragraph{Stopping criterion}  We stop the algorithm when the duality gap is smaller than the target precision $\epsilon$. The dual of the binary optimization problem (\ref{eq:dual-binary}) can be written
\BEQ\label{eq:bin-dual}
\BA{ll}
\mbox{minimize} & \bar \alpha \max\{\|PG_{11}P^T\|_1,\frac{\|G_{22}\|_1}{k^2},\|G_{22}\|_\infty,\frac{\|PG_{12}\|_1}{k}\}-\Tr(PG_{12})\\
\mbox{subject to} & \Tr(G)=1,~G\succeq 0,\\
\EA\EEQ
in the block matrix variable $G\in\symm_{n+p}$ with blocks $G_{ij}$, $i,j=1,2$. Since the gradient $\nabla f(U)$ produces a dual feasible point by construction, we can use it to compute a dual objective value and bound the duality gap at the current point $U$.

\paragraph{Complexity}  According to \cite{Nest04a}, the total worst-case complexity to solve (\ref{eq:dual-binary}) with absolute accuracy less than $\epsilon$ is then given by
\[
O\left( \frac{n^4\sqrt{\log n}}{\epsilon}\right)
\]
Each iteration of the algorithm requires computing a matrix exponential at a cost of $O(n^3)$ and the algorithm requires $O(n\sqrt{\log n }/\epsilon)$ iterations to reach a target precision of $\epsilon>0$. Note that while this smooth optimization method can be used to produce reasonable complexity bounds for checking if the optimal value of (\ref{eq:dual-binary}) is positive, i.e. if $\alpha_k \leq \bar \alpha$, in practice the algorithm is relatively slow and we mostly use interior point solvers on smaller problems to conduct experiments in the next section.

\section{Numerical Results}
\label{s:numres} In this section, we illustrate the numerical performance of the semidefinite relaxation detailed in section~\ref{s:relax}.

\subsection{Illustration}
We test the semidefinite relaxation in (\ref{eq:dual-ker}) on a sample of ten random Gaussian matrices $A\in\reals^{p \times n}$ with $A_{ij}\sim\mathcal{N}(0,1/\sqrt{p})$, $n=30$ and $p=22$. For each of these matrices, we solve problem (\ref{eq:dual-ker}) for $k=2,\ldots,5$ to produce upper bounds on $\alpha_k$, hence on $C_k$ in (\ref{def:nullspace}), with $\alpha_k=1-1/C_k$. From \cite{Dono01}, we know that if $\alpha_k<1$ then we can bound the decoding error in (\ref{eq:decoding-error}), and if  $\alpha_k<1/2$ then the original signal can be recovered exactly by solving a linear program. We also plot the randomized values for $y^Tx$ with $k=1$ together with the semidefinite relaxation bound. 

\begin{figure}[!h]
\begin{center}
\begin{tabular}{cc}
\psfrag{k}[t][b]{Cardinality}
\psfrag{alphak}[b][t]{Bounds on $\alpha_k$}
\psfrag{l1}{$\ell_1$ recovery}
\psfrag{l0}[c][r]{$\ell_0$ recovery}
\includegraphics[width=.49\textwidth]{./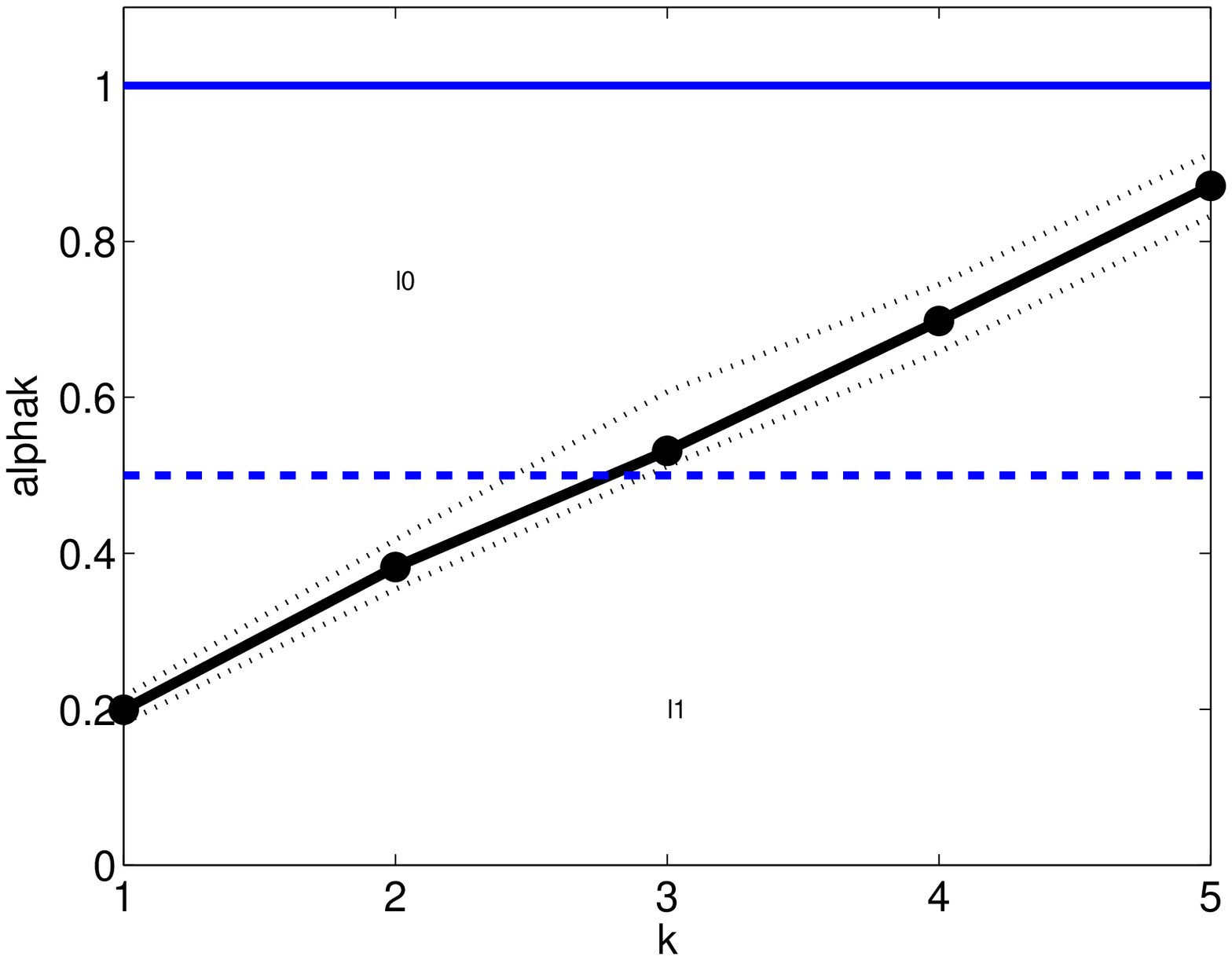} &
\psfrag{alphak}[t][b]{$\alpha_1$}
\psfrag{numoccur}[b][t]{Number of samples}
\psfrag{sdp}[t][b]{SDP}
\includegraphics[width=.49\textwidth]{./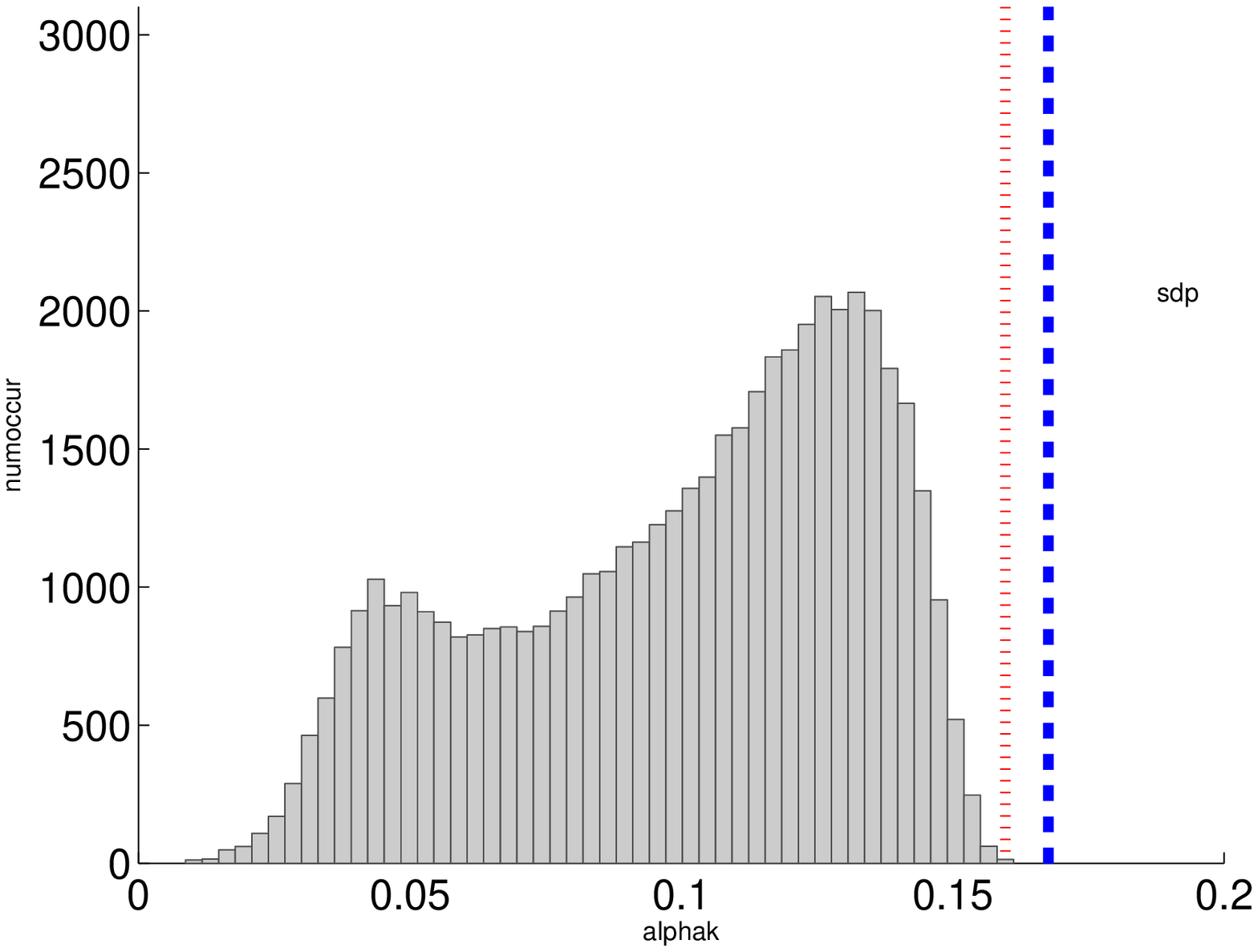}\\
\end{tabular}
\end{center}
\caption{{\bf Bounds on $\alpha_k$.} \emph{Left:} Upper bounds on $\alpha_k$ obtained by solving (\ref{eq:dual-ker}) for various values of $k$. Median bound over ten samples (solid line), dotted lines at pointwise minimum and maximum. \emph{Right:} Lower bound on $\alpha_1$ obtained by randomization (red dotted line) compared with semidefinite relaxation bound (SDP dashed line).\label{fig:alpha-bounds}}
\end{figure}

\begin{figure}[!h]
\begin{center}
\begin{tabular}{cc}
\psfrag{k}[t][b]{Cardinality}
\psfrag{prob}[b][t]{Probability of recovery}
\includegraphics[width=.49\textwidth]{./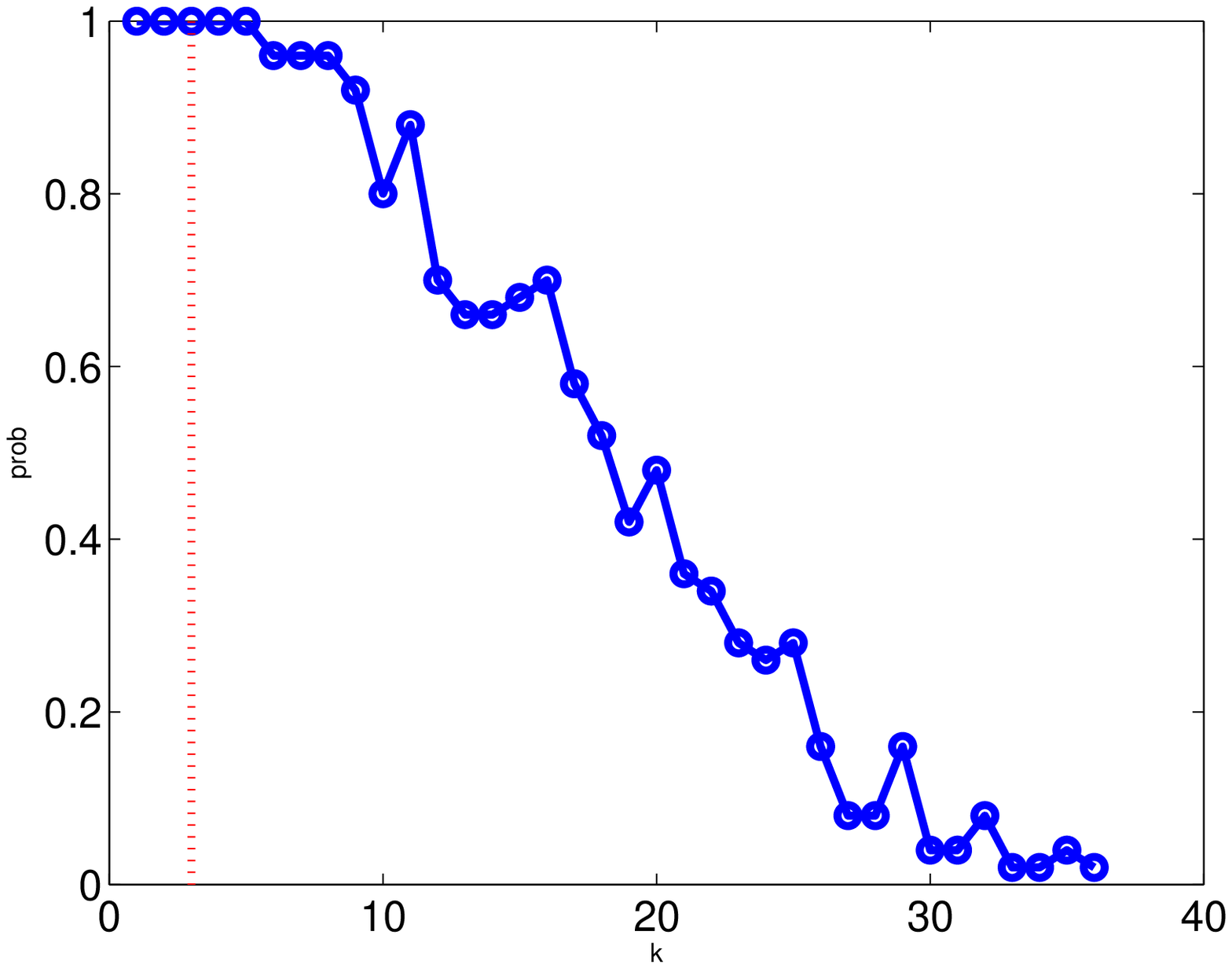} &
\psfrag{k}[t][b]{Cardinality}
\psfrag{l1}[b][t]{Mean $\ell_1$ error}
\includegraphics[width=.49\textwidth]{./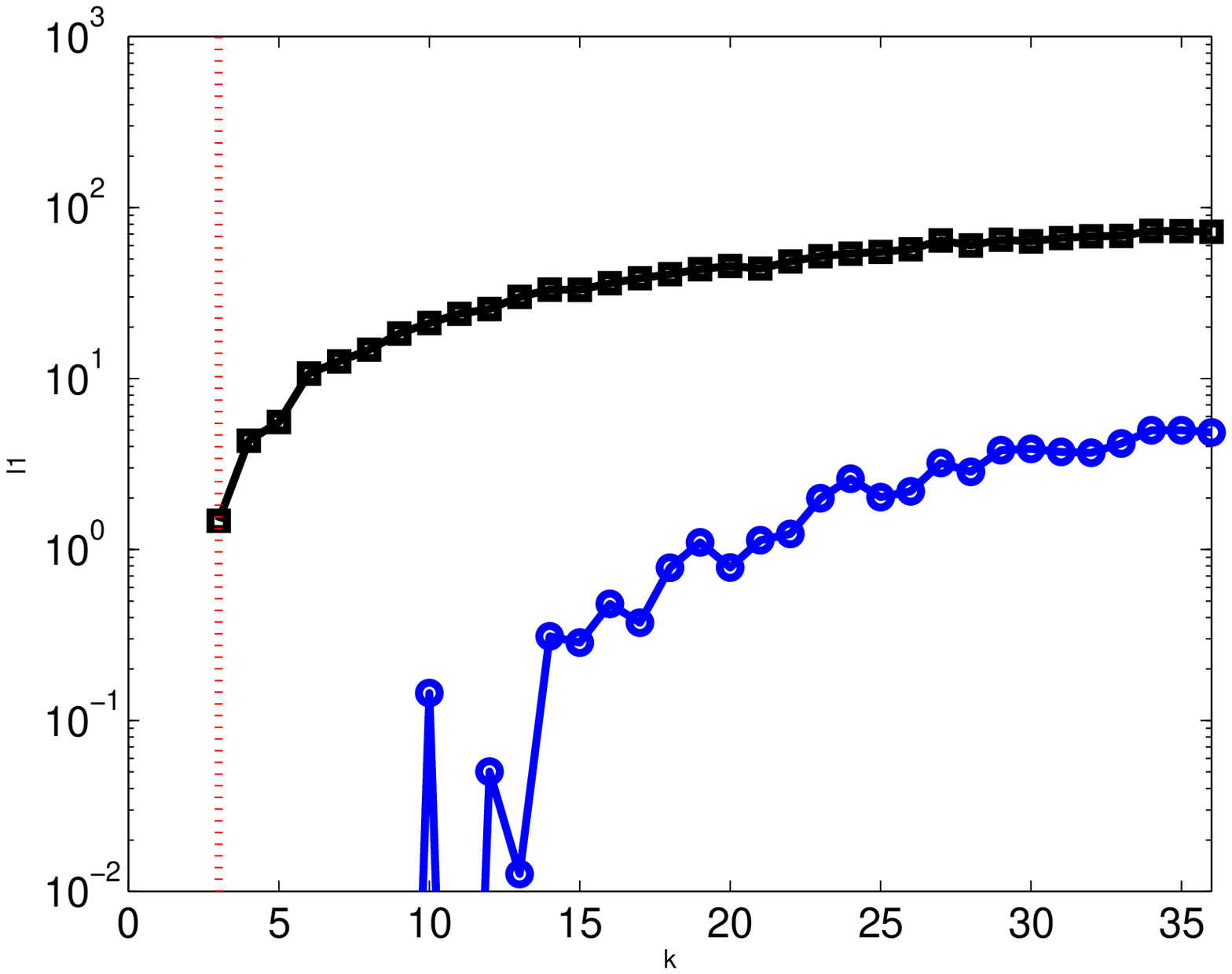}\\
\end{tabular}
\end{center}
\caption{{\bf Sparse Recovery.} \emph{Left:} Empirical probability of recovering the original sparse signal using the LP decoder in~(\ref{eq:delta-one}). The dashed line is at the strong recovery threshold. \emph{Right:} Empirical mean $\ell_1$ recovery error $\|x-x_0\|_1$ using the LP decoder (circles) compared with the bound induced by Theorem~\ref{th:lp-recov} (squares). \label{fig:mse-bounds}}
\end{figure}

Next, in Figure \ref{fig:mse-bounds}, we use a Gaussian matrix $A\in\reals^{p \times n}$ with $A_{ij}\sim\mathcal{N}(0,1/\sqrt{p})$, $n=36$ and $p=27$ and, for each $k$, we sample fifty information vectors $v=Ax_0$ where $x_0$ is uniformly distributed and has cardinality~$k$. On the left, we plot the probability of recovering the original sparse signal $x_0$ using the linear programming decoder in (\ref{eq:delta-one}). On the right, we plot the mean $\ell_1$ recovery error $\|x-x_0\|_1$ using the linear programming decoder in (\ref{eq:delta-one}) and compare it with the bound induced by Theorem~\ref{th:lp-recov}. 

\subsection{Performance on compressed sensing matrices}
In tables \ref{tab:fourier}, \ref{tab:gauss} and \ref{tab:bernou}, we compare the performance of the linear programming relaxation bound on~$\alpha_k$ derived in \cite{Judi08} with that of the semidefinite programming bound detailed in Section~\ref{s:relax}. We test these bounds for various matrix shape ratios $\rho=m/n$, target cardinalities $k$ on matrices with Fourier, Bernoulli or Gaussian coefficients using SDPT3 by \cite{Toh96} to solve problem (\ref{eq:dual-ker}). We show median bounds computed over ten sample matrices for each type, hence test a total of 600 different matrices. We compare these relaxation bounds with the upper bounds produced by sequential convex optimization as in \citet[\S4.1]{Judi08}. In the Gaussian case, we also compare these relaxation bounds with the asymptotic thresholds on strong and weak (high probability) recovery discussed in \cite{Dono08}. The semidefinite bounds on $\alpha_k$ always match with the LP bounds in \cite{Judi08} when $k=1$ (both are tight), and are often smaller than LP bounds whenever $k$ is greater than 1 on Gaussian or Bernoulli matrices. The semidefinite upper bound on $\alpha_k$ was smaller than the LP one in 563 out of the 600 matrices sampled here, with the difference ranging from 4e-2 to -9e-4. Of course, this semidefinite relaxation is significantly more expensive than the LP based one and that these experiments thus had to be performed on very small matrices. 

\begin{table}[h]
\small{
\begin{center}
\begin{tabular}{r|c|c|c|c|c|c|c}
Relaxation & $\rho$ & $\alpha_1$ & $\alpha_2$ & $\alpha_3$ & $\alpha_4$ & $\alpha_5$ & Upper bound \\
\hline 
LP & 0.5 & \bf 0.21 & \bf 0.38 & 0.57 & 0.82 & 0.98 & 2 \\
SDP & 0.5 & \bf 0.21 & \bf 0.38 & 0.57 & 0.82 & 0.98 & 2\\
SDP low. & 0.5 & 0.05 & 0.10 & 0.16 & 0.24 & 0.32 & 2\\
\hline
LP & 0.6 & \bf 0.16 & \bf 0.31 & \bf 0.46 & 0.61 & 0.82 & 3\\
SDP & 0.6 & \bf 0.16 & \bf 0.31 & \bf 0.46 & 0.61 & 0.82 & 3\\
SDP low. & 0.6 & 0.04 & 0.09 & 0.15 & 0.20 & 0.31 & 3\\
\hline
LP & 0.7 & \bf 0.12 & \bf 0.25 & \bf 0.39 & 0.50 & 0.62 & 4\\
SDP & 0.7 & \bf 0.12 & \bf 0.25 & \bf 0.39 & 0.50 & 0.62 & 4\\
SDP low. & 0.7 & 0.04 & 0.09 & 0.14 & 0.18 & 0.22 & 4\\
\hline
LP & 0.8 & \bf 0.10 & \bf 0.20 & \bf 0.30 & \bf 0.38 & \bf 0.48 & 6\\
SDP & 0.8 & \bf 0.10 & \bf 0.20 & \bf 0.30 & \bf 0.38 & \bf 0.48 & 6\\
SDP low. & 0.8 & 0.04 & 0.07 & 0.13 & 0.17 & 0.23  & 6\\
\end{tabular}
\end{center}
}
\caption{Given ten sample {\em Fourier} matrices of leading dimension $n=40$, we list median upper bounds on the values of $\alpha_k$ for various cardinalities $k$ and matrix shape ratios $\rho$, computed using the linear programming (LP) relaxation in \cite{Judi08} and the semidefinite relaxation (SDP) detailed in this paper. We also list the upper bound on strong recovery computed using sequential convex optimization and the lower bound on~$\alpha_k$ obtained by randomization using the SDP solution (SDP low.). Values of $\alpha_k$ below 1/2, for which strong recovery is certified, are highlighted in bold. \label{tab:fourier}}
\end{table}

\begin{table}[p]
\small{
\begin{center}
\begin{tabular}{r|c|c|c|c|c|c|c|c}
Relaxation & $\rho$ & $\alpha_1$ & $\alpha_2$ & $\alpha_3$ & $\alpha_4$ & $\alpha_5$ & Strong $k$ & Weak $k$ \\
\hline 
LP & 0.5 & \bf 0.27 & \bf 0.49 & 0.67 & 0.83 & 0.97 & 2 & 11\\
SDP & 0.5 & \bf 0.27 & \bf 0.49 & 0.65 & 0.81 & 0.94 & 2 & 11 \\
SDP low. & 0.5 & 0.27 & 0.31 & 0.33 & 0.32 & 0.35 & 2 & 11\\
\hline 
LP & 0.6 & \bf 0.22 & \bf 0.41 & 0.57 & 0.72 & 0.84 & 2 & 12\\
SDP & 0.6 & \bf 0.22 & \bf 0.41 & 0.56 & 0.70 & 0.82 & 2 & 12\\
SDP low. & 0.6 & 0.22 & 0.29 & 0.31 & 0.32 & 0.36 & 2 & 12\\
\hline 
LP & 0.7 & \bf 0.20 & \bf 0.34 & \bf 0.47 & 0.60 & 0.71 & 3 & 14\\
SDP & 0.7 & \bf 0.20 & \bf 0.34 & \bf 0.46 & 0.59 & 0.70 & 3 & 14\\
SDP low. & 0.7 & 0.20 & 0.27 & 0.31 & 0.35 & 0.38 & 3 & 14\\
\hline 
LP & 0.8 & \bf 0.15 & \bf 0.26 & \bf 0.37 & \bf 0.48 & 0.58 & 3 & 16\\
SDP & 0.8 & \bf 0.15 & \bf 0.26 & \bf 0.37 & \bf 0.48 & 0.58 & 3 & 16\\
SDP low. & 0.8 & 0.15 & 0.23 & 0.28 & 0.33 & 0.38 & 3 & 16\\
\end{tabular}
\end{center}
}
\caption{Given ten sample {\em Gaussian} matrices of leading dimension $n=40$, we list median upper bounds on the values of $\alpha_k$ for various cardinalities $k$ and matrix shape ratios $\rho$, computed using the linear programming (LP) relaxation in \cite{Judi08} and the semidefinite relaxation (SDP) detailed in this paper. We also list the asymptotic upper bound on both strong and weak recovery computed in \cite{Dono08} and the lower bound on~$\alpha_k$ obtained by randomization using the SDP solution (SDP low.). Values of $\alpha_k$ below 1/2, for which strong recovery is certified, are highlighted in bold. \label{tab:gauss}}
\end{table}

\begin{table}[p]
\small{
\begin{center}
\begin{tabular}{r|c|c|c|c|c|c|c}
Relaxation & $\rho$ & $\alpha_1$ & $\alpha_2$ & $\alpha_3$ & $\alpha_4$ & $\alpha_5$ & Upper bound\\
\hline 
LP & 0.5 & \bf 0.25 & \bf 0.45 & 0.64 & 0.82 & 0.97 & 2\\
SDP & 0.5 & \bf 0.25 & \bf 0.45 & 0.63 & 0.80 & 0.94 & 2\\
SDP low. & 0.5 & 0.25 & 0.28 & 0.29 & 0.29 & 0.34 & 2\\
\hline 
LP & 0.6 & \bf 0.21 & \bf 0.38 & 0.55 & 0.69 & 0.83 & 3\\
SDP & 0.6 & \bf 0.21 & \bf 0.38 & 0.54 & 0.68 & 0.81 & 3\\
SDP low. & 0.6 & 0.21 & 0.26 & 0.29 & 0.33 & 0.34 & 3\\
\hline 
LP & 0.7 & \bf 0.17 & \bf 0.32 & \bf 0.46 & 0.58 & 0.70 & 4\\
SDP & 0.7 & \bf 0.17 & \bf 0.32 & \bf 0.46 & 0.58 & 0.69 & 4\\
SDP low. & 0.7 & 0.17 & 0.24 & 0.29 & 0.33 & 0.37 & 4\\
\hline 
LP & 0.8 & \bf 0.14 & \bf 0.26 & \bf 0.38 & \bf 0.47 & 0.57 & 5\\
SDP & 0.8 & \bf 0.14 & \bf 0.26 & \bf 0.37 & \bf 0.47 & 0.57 & 5\\
SDP low. & 0.8 & 0.14 & 0.21 & 0.27 & 0.33 & 0.38 & 5\\
\end{tabular}
\end{center}
}
\caption{Given ten sample {\em Bernoulli} matrices of leading dimension $n=40$, we list median upper bounds on the values of $\alpha_k$ for various cardinalities $k$ and matrix shape ratios $\rho$, computed using the linear programming (LP) relaxation in \cite{Judi08} and the semidefinite relaxation (SDP) detailed in this paper. We also list the upper bound on strong recovery computed using sequential convex optimization and the lower bound on~$\alpha_k$ obtained by randomization using the SDP solution (SDP low.). Values of $\alpha_k$ below 1/2, for which strong recovery is certified, are highlighted in bold. \label{tab:bernou}}
\end{table}

\subsection{Tightness}
Section \ref{s:tight} shows that the tightness of the semidefinite relaxation is explicitly controlled by the following quantity
\[
\mu=g(X,\delta)h(Y,n,k,\delta),
\] 
where $g$ and $h$ are defined in (\ref{eq:defg}) and (\ref{eq:defh}) respectively. In Figure \ref{fig:gamma-hist}, we plot the histogram of values of $\mu$ for all 600 sample matrices computed above, and plot the same histogram on a subset of these results where the target cardinality $k$ was set to 1. We observe that while the relaxation performed quite well on most of these examples, the randomization bound on performance often gets very large whenever $k>1$. This can probably be explained by the fact that we only control the mean in Lemma \ref{lem:dev-xy}, not the quantile. We also notice that $\mu$ is highly concentrated when $k=1$ on Gaussian and Bernoulli matrices (where the results in Tables \ref{tab:gauss} and \ref{tab:bernou} are tight), while the performance is markedly worse for Fourier matrices. 

Finally, Tables \ref{tab:gauss} and \ref{tab:bernou} show that lower bounds on $\alpha_1$ obtained by randomization for Gaussian are always tight (the solution of the SDP was very close to rank one), while performance on higher values of $k$ and Fourier matrices is much worse. On 6 of these experiments however, the SDP randomization lower bound was higher than 1/2, which proved that $\alpha_5>1/2$, hence that the matrix did not satisfy the nullspace property at order 5.

\begin{figure}[!h]
\begin{center}
\begin{tabular}{cc}
\psfrag{XY12}[t][b]{$\mu$}
\psfrag{numoccur}[b][t]{Number of samples}
\includegraphics[width=.49\textwidth]{./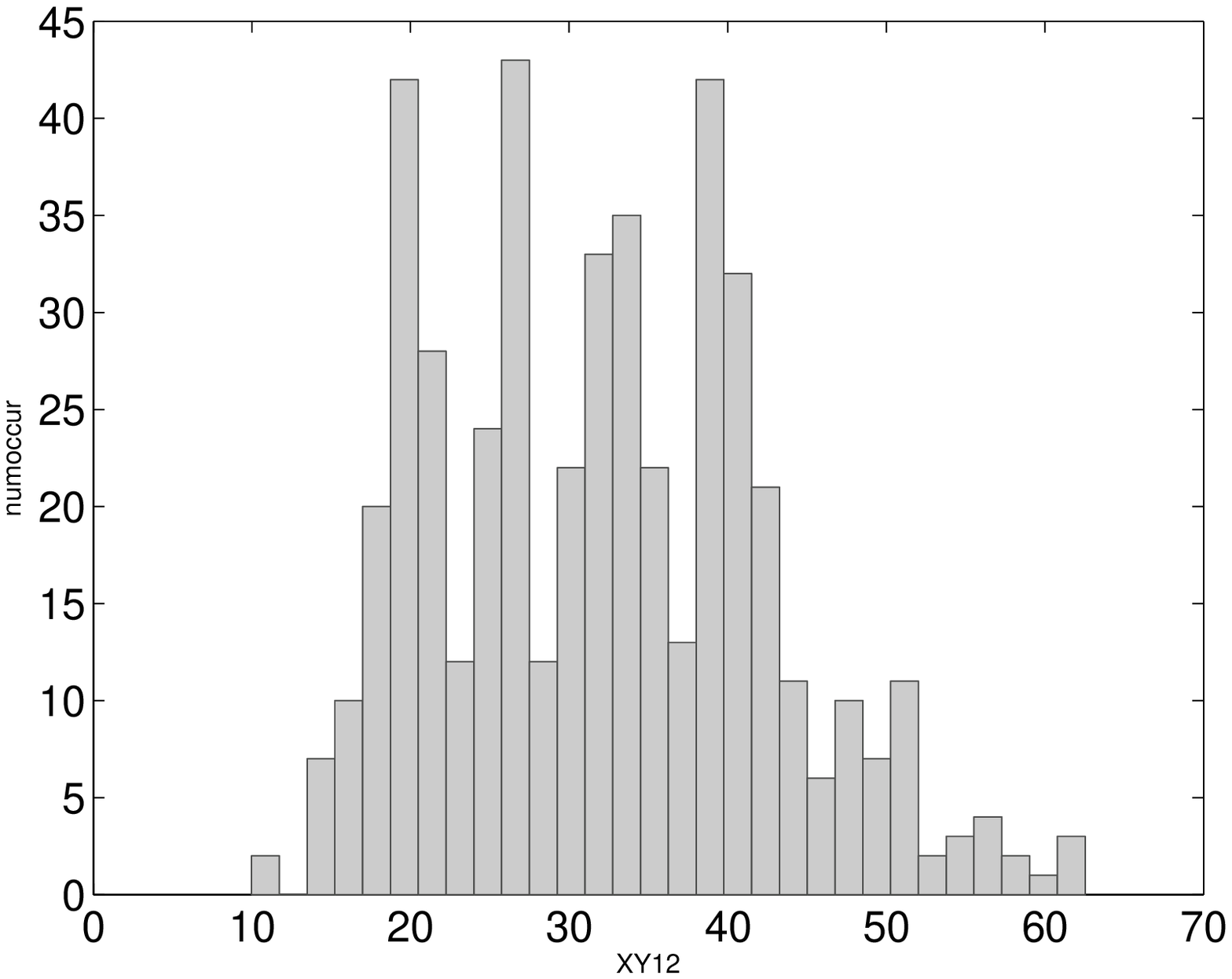} &
\psfrag{1XY12}[t][b]{$\mu$}
\psfrag{numoccur}[b][t]{Number of samples}
\includegraphics[width=.49\textwidth]{./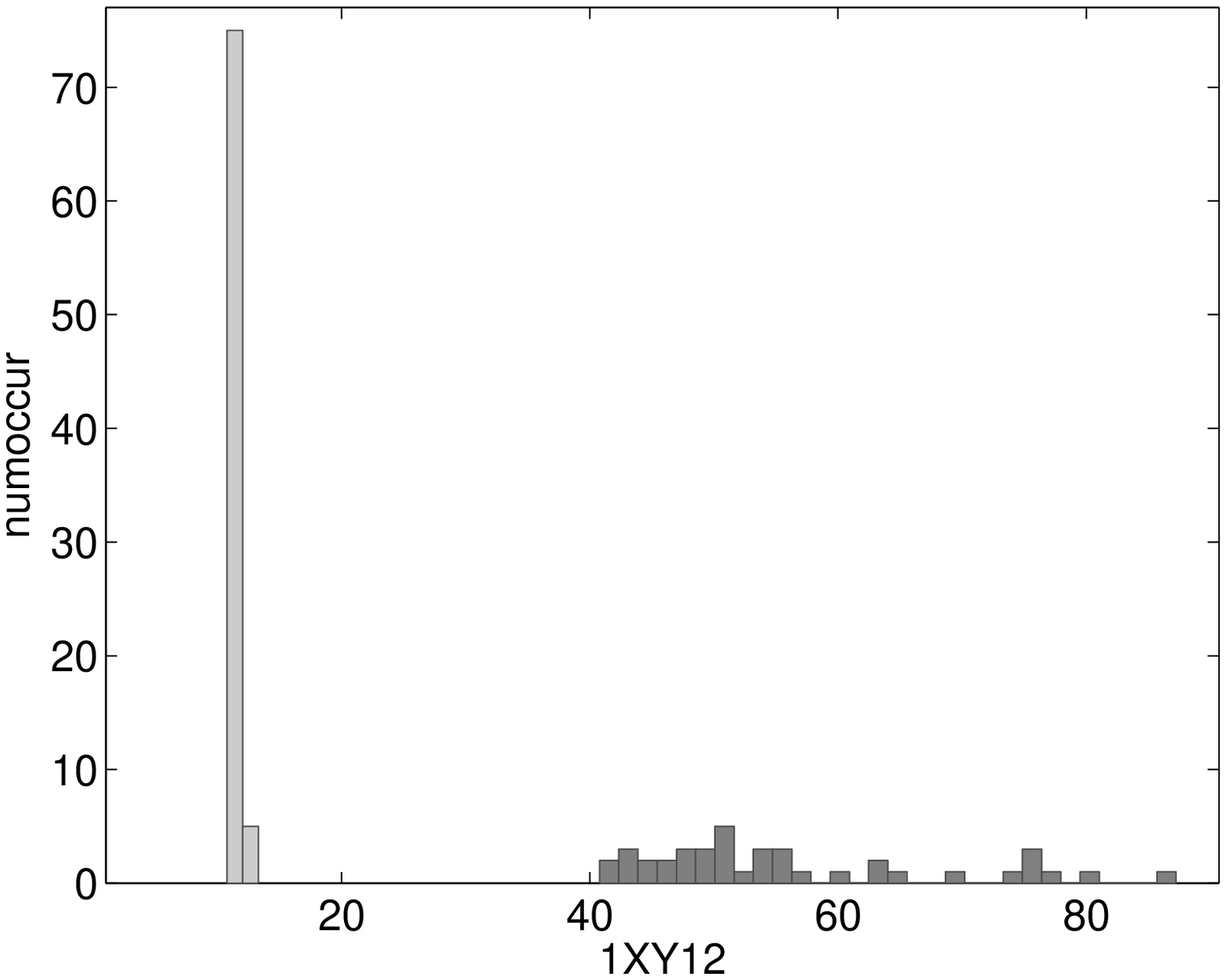}\\
\end{tabular}
\end{center}
\caption{{\bf Tightness.} \emph{Left:} Histogram of $\mu=g(X,\delta)h(Y,n,k,\delta)$ defined in (\ref{eq:defg}) and (\ref{eq:defh}), computed for all sample solution matrices in the experiments above when $k>1$.  \emph{Right:} Idem using only examples where the target cardinality is $k=1$, for Gaussian and Bernoulli matrices (light grey) or Fourier matrices (dark grey).\label{fig:gamma-hist}}
\end{figure}

\subsection{Numerical complexity}
We implemented the algorithm of Section (\ref{s:algos}) in MATLAB and tested it on random matrices. While the code handles matrices with $n=500$, it is still considerably slower than similar first-order algorithms applied to sparse PCA problems for example (see \cite{dAsp04a}). A possible explanation for this gap in performance is perhaps that the DSPCA semidefinite relaxation is always tight (in practice at least) hence iterates near the solution tend to be very close to rank one. This is not the case here as the matrix in (\ref{eq:max-max-relax}) is very rarely rank one and the number of significant eigenvalues has a direct impact on actual convergence speed. To illustrate this point, Figure \ref{fig:cpu} shows a Scree plot of the optimal solution to~(\ref{eq:max-max-relax}) for a small Gaussian matrix (obtained by IP methods with a target precision of~$10^{-8}$), while Table \ref{tab:cpu} shows, as a benchmark, total CPU time for proving that $\alpha_1<1/2$ on Gaussian matrices, for various values of $n$. We set the accuracy $1e-2$ and stop the code whenever positive objective values are reached. Unfortunately, performance for larger values of $k$ is typically much worse (which is why we used IP methods to run most experiments in this section) and in many cases, convergence is hard to track as the dual objective values computed using the gradient in (\ref{eq:bin-dual}) produces a relatively coarse gap bounds as illustrated in Figure \ref{fig:cpu} for a small Gaussian matrix.

\begin{table}
\begin{center}
\begin{tabular}{|r|c|c|c|c|}
\hline
$n$ & 50 & 100 & 200 & 500\\
\hline
CPU time &  00 h  01 m &  00 h 10 m &  01 h 38 m & 37 h 22 m\\
\hline
\end{tabular}
\end{center}
\caption{CPU time to show $\alpha_1<1/2$, using the algorithm of Section \ref{s:algos} on Gaussian matrices with shape ratio $\rho=.7$ for various values of $n$.\label{tab:cpu}}
\end{table}

\begin{figure}[!h]
\begin{center}
\begin{tabular}{cc}
\psfrag{iter}[t][b]{Iterations}
\psfrag{alphak}[b][t]{$\alpha_k$}
\includegraphics[width=.49\textwidth]{./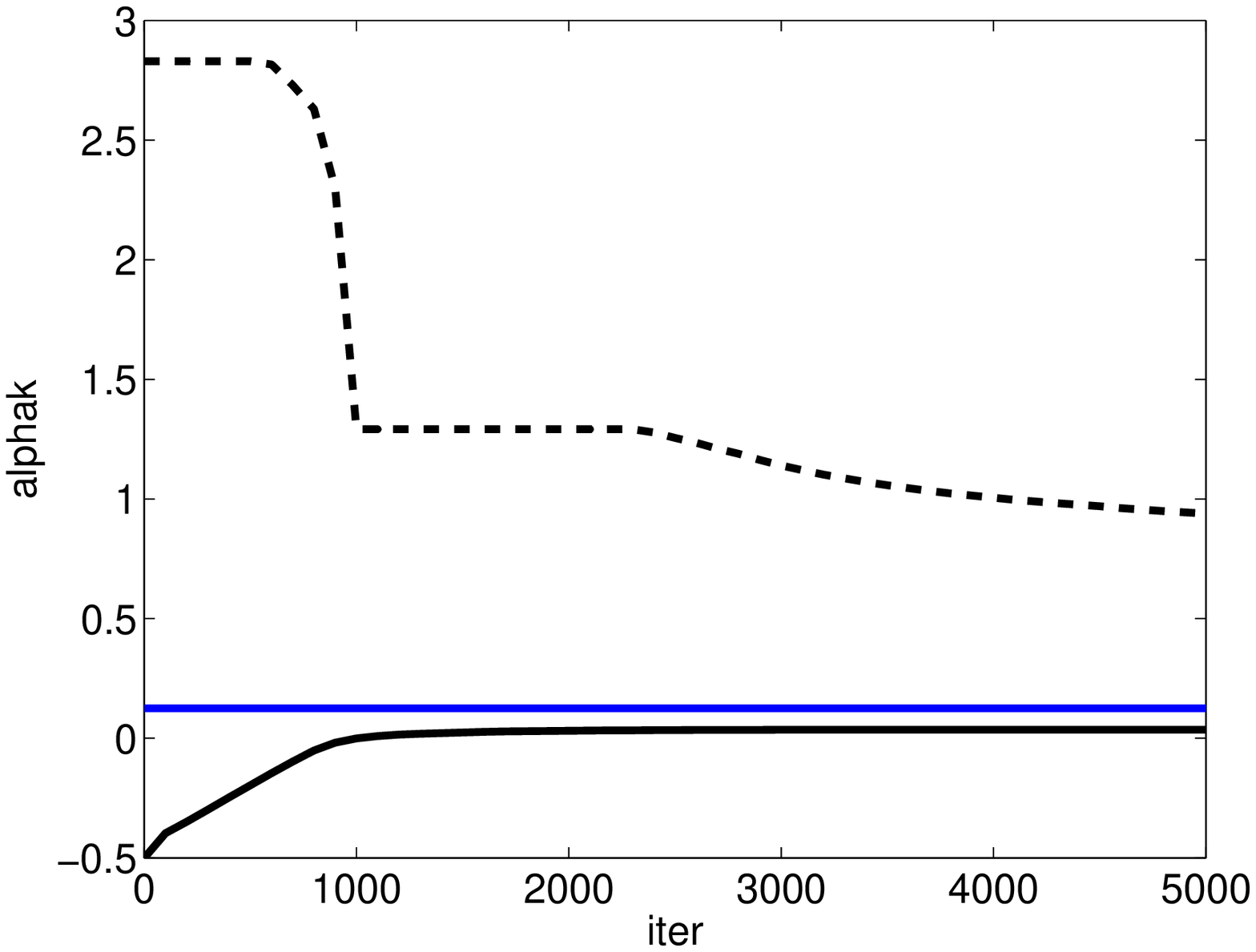} &
\psfrag{eigs}[t][b]{Eigenvalues}
\includegraphics[width=.49\textwidth]{./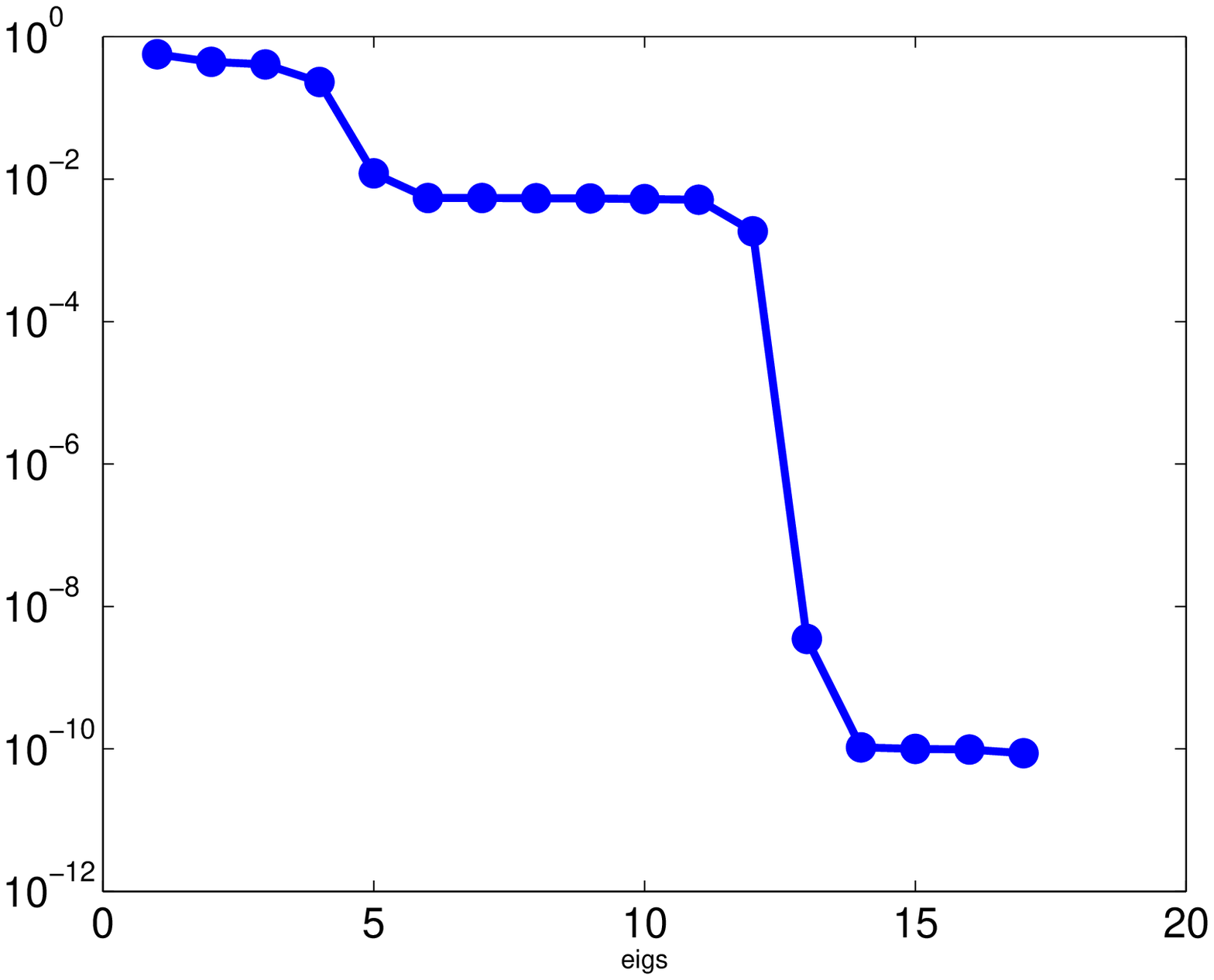}\\
\end{tabular}
\end{center}
\caption{{\bf Complexity.} \emph{Left:} Primal and dual bounds on the optimal solution (computed using interior point methods) using the algorithm of Section   \ref{s:algos} on a small Gaussian matrix. \emph{Right:} Scree plot of the optimal solution to (\ref{eq:max-max-relax}) for a small Gaussian matrix (obtained by interior point methods with a target precision of~$10^{-8}$). \label{fig:cpu}}
\end{figure}

\section{Conclusion \& Directions for Further Research}
We have detailed a semidefinite relaxation for the problem of testing if a matrix satisfies the nullspace property defined in \cite{Dono01} or \cite{Cohe06}. This relaxation is tight for $k=1$ and matches (numerically) the linear programming relaxation in \cite{Judi08}. It is often slightly tighter (again numerically) for larger values of $k$. We can also remark that the matrix $A$ only appears in the relaxation (\ref{eq:sdp-kernel}) in ``kernel'' format $A^TA$, where the constraints are linear in the kernel matrix $A^TA$. This means that this relaxation might allow sparse experiment design problems to be solved, while maintaining convexity.

Of course, these small scale experiments do not really shed light on the actual performance of both relaxations on larger, more realistic problems. In particular, applications in imaging and signal processing would require solving problems where both $n$ and $k$ are several orders of magnitude larger than the values considered in this paper or in \cite{Judi08} and the question of finding tractable relaxations or algorithms that can handle such problem sizes remains open. Finally, the three different tractable tests for sparse recovery conditions, derived in \cite{dAsp08b}, \cite{Judi08} and this paper, are {\em all} limited to showing recovery at the (suboptimal) rate $k=O(\sqrt{m})$. Finding tractable test for sparse recovery at cardinalities $k$ closer to the optimal rate $O(m)$ also remains an open problem.

\section*{Acknowledgements}
The authors are grateful to Arkadi Nemirovski (who suggested in particular the columnwise redundant constraints in (\ref{eq:max-max-relax-col}) and the performance bounds) and Anatoli Juditsky for very helpful comments and suggestions. Would like to thank Ingrid Daubechies for first attracting our attention to the nullspace property. We are also grateful to two anonymous referees for numerous comments and suggestions. We thank Jared Tanner for forwarding us his numerical results. Finally, we acknowledge support from NSF grant DMS-0625352, NSF CDI grant SES-0835550, a NSF CAREER award, a Peek junior faculty fellowship and a Howard B. Wentz Jr. junior faculty award.

\small{\bibliographystyle{plainnat}
\bibliography{MainPerso}}

\end{document}